\documentclass[12pt, twoside, leqno]{article}


\usepackage{amsmath,amsthm}
\usepackage{amssymb}
\usepackage[usenames,dvipsnames]{pstricks}

\usepackage{enumerate}
\usepackage{bbm}
\usepackage{nicefrac}
\usepackage{hyperref}

\usepackage{graphicx}
\usepackage{mathrsfs}

\makeatletter
\@namedef{subjclassname@2010}{%
  \textup{2010} Mathematics Subject Classification}
\makeatother
\DeclareMathOperator{\lcm}{lcm}


\pagestyle{myheadings}
\markboth{A. Bartnicka}{Automorphisms of Toeplitz $\mathscr{B}$-free systems}



\newtheorem{thm}{Theorem}[section]
\newtheorem{cor}[thm]{Corollary}
\newtheorem{lem}[thm]{Lemma}
\newtheorem{prop}[thm]{Proposition}



\theoremstyle{definition}
\newtheorem{defin}[thm]{Definition}
\newtheorem{rem}[thm]{Remark}



\numberwithin{equation}{section}


\frenchspacing

\textwidth=16.5cm
\textheight=23cm
\parindent=16pt
\oddsidemargin=-0.5cm
\evensidemargin=-0.5cm
\topmargin=-0.5cm


\newcommand{\raz}{\mathbbm{1}}

\newcommand{\divides}{\mid}
\newcommand{\ndivides}{\nmid}
\newcommand{\Per}{\operatorname{Per}\nolimits}
\newcommand{\un}{\underline}
\newcommand{\cm}{{\mathcal M}}
\newcommand{\ov}{\overline}
\newcommand{\Z}{{\mathbb{Z}}}
\newcommand{\N}{{\mathbb{N}}}
\newcommand{\bdelta}{\boldsymbol{\delta}}


\begin{document}


\baselineskip=17pt


\title{Automorphisms of Toeplitz $\mathscr{B}$-free systems}

\author{Aurelia Bartnicka\\
Faculty of Mathematics and Computer Science,\\
Nicolaus Copernicus University\\
Chopina 12/18\\
87-100 Toru\'{n}\\
Poland\\
E-mail: aurbart@mat.umk.pl}

\date{}

\maketitle

\renewcommand{\thefootnote}{}

\footnote{2010 \emph{Mathematics Subject Classification}: Primary 54H20; Secondary 37B10.}

\footnote{\emph{Key words and phrases}: $\mathscr{B}$-free subshift, Toeplitz subshift, automorphism group.}

\renewcommand{\thefootnote}{\arabic{footnote}}
\setcounter{footnote}{0}

\begin{abstract}
For each $\mathscr{B}$-free subshift given by $\mathscr{B}=\{2^ib_i\}_{i\in\N}$, where $\{b_i\}_{i\in\N}$ is a set of pairwise coprime odd numbers greater than one, it is shown that its automorphism group consists solely of powers of the shift.
\end{abstract}

\section{Introduction}
Consider the compact space $\{0,1\}^\Z$ of two sided sequences provided with the product topology. On this space we have the natural $\Z$-action by the \emph{left shift} $S$, i.e. \begin{equation*}\label{eq:translations}
S({(x_m)}_{m\in \Z})={(y_m)}_{m\in \Z},
\end{equation*} 
where $y_m=x_{m+1}$ for each $m\in\Z$.

We say that a set $Z\subset\{0,1\}^\Z$ is a \emph{subshift} if it is closed and invariant under the shift, i.e. $SZ=Z$. For any subshift system $(Z, S)$ by the \emph{automorphism group} $C(S)$ we mean the set of all homeomorphisms $U \colon Z\to Z$ which commute with $S$, i.e. $U\circ S=S\circ U$. The set $C(S)$ is a group.

A natural source of subshifts is to take $x\in\{0,1\}^\Z$ and $(X_x,S)$, where $X_x:=\ov{\mathcal{O}_S(x)}$ with $\mathcal{O}_S(x):=\{S^n x\ :\ n\in\Z\}$ (equivalently, $X_x=\{y\in\{0,1\}^\Z \ : \ \text{ each block appearing in }y$\\
$\text{ appears in }x\}$). Each $x=(x_n)_{n\in\Z}\in\{0,1\}^\Z$ can be identified with $\raz_{\text{supp }x}$, where $\text{supp }x:=\{n\in\Z \ : \ x_n=1\}$ is the~\emph{support} of $x$. In the present paper, we will study the case, where the support is the set of \emph{$\mathscr{B}$-free numbers} $\mathcal{F}_\mathscr{B}:=\Z\setminus\mathcal{M}_\mathscr{B}$ for some $\mathscr{B}\subset\N$, where $\mathcal{M}_\mathscr{B}$ is the set of multiples, i.e. $\mathcal{M}_\mathscr{B}=\bigcup_{b\in\mathscr{B}}b\Z$. Let $\eta=\raz_{\mathcal{F}_\mathscr{B}}$. By a \emph{$\mathscr{B}$-free subshift} we mean $(X_\eta,S)$. We will constantly assume that $\mathscr{B}$ is \emph{primitive} that is, for any $b$, $b'\in\mathscr{B}$ if $b\divides b'$ then $b=b'$.

When $(X,S)$ is a subshift the automorphism group is countable because each its member is a coding \cite{MR0259881} and it is interesting to know how complicated $C(S)$ can be; see e.g. \cite{MR3565928} and \cite{MR3436754,DDMP} for some recent results and the references therein. In the present paper, we consider the $\mathscr{B}$-free systems whose dynamical properties are under intensive study; see e.g. ~\cite{Ab-Le-Ru},~\cite{Baake:2015aa},~\cite{MR3055764}, \cite{MR3356811},~\cite{Peckner:2015aa},~\cite{sarnak-lectures} and especially \cite{AB-SK-JKP-ML} for more historical references. In case of Erd\H{o}s, i.e.\ when $\mathscr{B}$ is infinite, its elements are pairwise coprime and $\sum_{b\in\mathscr{B}}1/b<\infty$, it has been proved that the automorphism group is \emph{trivial}, i.e. it consists solely of powers of the shift, \cite{Me}. Recall that the Erd\H{o}s case implies $(X_\eta,S)$ to be proximal and non-minimal \cite{Ab-Le-Ru}. On the other hand, when a $\mathscr{B}$-free subshift $(X_\eta,S)$ is minimal then it must be Toeplitz \cite{AB-SK-JKP-ML}, as a matter of fact, $\eta$ itself has to be Toeplitz \cite{SK-GK-ML}. The main result of this paper is the following.

\begin{thm}\label{centralizer}
Let $\{b_i\}_{i\in\N}$ be a set of pairwise coprime odd numbers greater than one and $\mathscr{B}=\{2^ib_i\}_{i\in\N}$. Then the automorphism group of the $\mathscr{B}$-free subshift $(X_\eta,S)$ is trivial.
\end{thm}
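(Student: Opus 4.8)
The plan is to realize $(X_\eta,S)$ as a minimal almost one-to-one extension of an explicit odometer and then run the standard ``factor through the maximal equicontinuous factor'' strategy, with the arithmetic of $\mathscr{B}=\{2^ib_i\}$ entering decisively at the very last step.

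First I would pin down the Toeplitz structure by hand, exploiting that the $2$-adic valuations of the elements of $\mathscr{B}$ grow. The key observation is that a position $n$ with $v_2(n)=k$ can be a multiple of $2^ib_i$ only for $i\le k$: indeed $2^ib_i\divides n$ forces $i\le v_2(n)=k$, and then (as $b_i$ is odd) it is equivalent to $b_i\divides n$. Hence whether $n$ is $\mathscr{B}$-free is decided by the finitely many congruences $b_i\nmid n$, $i\le k$. Consequently, setting $p_k:=2^{k+1}\prod_{i\le k}b_i$, every $n$ with $v_2(n)=k$ satisfies $\eta(n+jp_k)=\eta(n)$ for all $j$ (adding $p_k$ preserves $v_2=k$ and all residues modulo $b_i$, $i\le k$, while rendering the higher obstructions impossible). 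This shows directly that $\eta$ is Toeplitz with essential periods $p_k$, and identifies the maximal equicontinuous factor as the odometer $H=\varprojlim\Z/p_k\Z\cong\Z_2\times\prod_i\Z/b_i\Z$, with factor map $\pi\colon X_\eta\to H$ satisfying $\pi(\eta)=0$ and $\pi(Sx)=\pi(x)+1$. In particular all odd coordinates of $\eta$ are $1$, which already exhibits the $2$-adic coordinate $\pi_2$.

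Next I would carry out the general reduction. By Curtis--Hedlund--Lyndon every $U\in C(S)$ is a sliding block code of some radius $r$, and so is $U^{-1}$. Since $H$ is the maximal equicontinuous factor, $U$ descends to a homeomorphism $\bar U$ of $H$ commuting with the rotation, whence $\bar U=R_h$ is translation by some $h=h_U\in H$; the assignment $U\mapsto h_U$ is a homomorphism carrying $S^k$ to the canonical image of $k\in\Z$. Because $\pi$ is almost one-to-one (the Toeplitz point $\eta$ and its shifts are points of injectivity), the kernel is trivial: if $h_U=0$ then $\pi\circ U=\pi$, so $U$ fixes every point of the dense set of injectivity and hence $U=\mathrm{id}$. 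It therefore remains to prove that the image of $C(S)$ in $H$ is exactly the copy of $\Z$, i.e. that $h_U$ always lies in the diagonal $\{(k;(k\bmod b_i)_i):k\in\Z\}$; granting this, $U$ and the corresponding $S^k$ induce the same translation, so $US^{-k}=\mathrm{id}$ and $U=S^k$.

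The crux, and the expected main obstacle, is thus showing that the induced rotation $h_U$ is integral. This is exactly where the shape $\mathscr{B}=\{2^ib_i\}$ must be used: for a general Toeplitz system the analogous rotation can be a genuine non-integer odometer element, and a finite coding radius alone does not bound the $2$-adic component $h_2$, since reading the phase modulo $2^{k+1}$ requires a window of size $\sim\prod_{i\le k}b_i\to\infty$. My plan is to analyse how the radius-$r$ code $U$ acts on the self-similar hierarchy of periodic blocks coming from the periods $p_k$: using that consecutive $2$-adic scales differ only by the factor $2$ while the ``active'' odd obstructions at level $k$ are merely $b_1,\dots,b_k$, I would argue that $U$ conjugates the level-$k$ periodic skeleton to itself, and that the only phase shifts of this skeleton realizable simultaneously by a fixed finite-radius code and its finite-radius inverse are integer translations; a recurrence/return-word argument over occurrences of long $\eta$-blocks would then force the components $h_i$ modulo $b_i$ to agree with that same integer. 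Making the ``only integer shifts are realizable'' step precise and uniform in $k$ is the delicate point, and I expect it to consume the bulk of the work.
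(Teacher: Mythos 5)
Your reduction framework is sound and coincides with the paper's own setup: $\eta$ is Toeplitz with periodic structure $(p_t)$, the maximal equicontinuous factor is the odometer $G=\varprojlim \Z/p_t\Z\cong\Z_2\times\prod_i\Z/b_i\Z$, every $U\in C(S)$ is a sliding block code which descends to a translation $R_{h_U}$ of $G$, the descent has trivial kernel because $\pi$ is injective over a dense set of fibres, and triviality of $C(S)$ is equivalent to $h_U\in\Z\overline{1}$. The problem is that your argument stops exactly where the theorem begins. Everything you actually establish holds for a general Toeplitz subshift viewed as an almost one-to-one extension of its odometer, and for general Toeplitz subshifts the conclusion is false: the paper's final remarks recall $k_t$-Toeplitz flows, which are regular and even have separated holes, yet admit non-trivial automorphisms, as well as an explicit example where the Boolean complement is an automorphism. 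So the step you defer --- ``the only phase shifts realizable simultaneously by a fixed finite-radius code and its finite-radius inverse are integer translations'' --- is not a uniformity issue to be cleaned up later; it is the entire content of the theorem, and your proposal offers no mechanism for it beyond the stated intention to find one.

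What closes this gap in the paper is a precise arithmetic description of the holes (the positions outside $\Per_{p_t}(\eta)$) and three consequences. First, the holes of the period-$p_t$ block are exactly the $s\in[0,p_t)$ with $2^t\divides s$ and $b_i\ndivides s$ for all $i\le t$ (Lemma \ref{holes}); in particular consecutive holes are at distance at least $2^t$. Second, if $h=(n_t)_t$ lifts to a code of radius $k$, then each hole of the skeleton of the image must lie within $k$ of a hole of the skeleton of $\eta$ (otherwise the code would compute the symbol at a hole from $p_t$-periodic coordinates), and once $2^{t_0}>k$ the spacing forces a single common offset $k'$ for all holes and all $t\ge t_0$ (Lemma \ref{space}); this yields $2^t\divides n_t-k'$ (Lemma \ref{eq}). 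Note that this refutes your heuristic worry that a finite window cannot control the $2$-adic coordinate: what the code must respect is not the phase but the hole set, and the holes live on $2^t\Z$. Third, the quotients $I_j^{(t)}/2^t$ of the hole positions realize every nonzero residue class modulo each $b_i$, $i\le t$ (Lemma \ref{podz}); combined with the invariance of the hole set under translation by $k'-n_t$ modulo $p_t$ (Lemma \ref{shift}), this forces $b_i\divides n_t-k'$ for every $i\le t$, hence $p_t\divides n_t-k'$, hence $n_t=k'$ for all large $t$, i.e. $h=\overline{k'}\in\Z\overline{1}$. Your proposed substitutes (self-similarity of skeletons, return words of long $\eta$-blocks) never isolate these facts, and the $k_t$-Toeplitz counterexamples show that no argument ignoring them can succeed.
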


While here we consider a minimal case in contrast to \cite{Me}, where a proximal case has been studied, one more difference between the result in \cite{Me} and Theorem \ref{centralizer} can be pointed out. Indeed, in a general setup one can consider all continuous (not necessarly invertible) maps commuting with $S$. Such a~semigroup of maps in the context of \cite{Me} is definitely non-trivial. Indeed, the subshifts considered in \cite{Me} are hereditary, i.e. for any $x\in X_\eta$ and $y\leq x$ coordinatewise, we have $y\in X_\eta$. Notice that for any $k\in\N$ we can extend any code $C\colon\{0,1\}^k\to\{0,1\}$ to the map of the space $\{0,1\}^\Z$ by the formula $\widehat{C}(x)(m)=C(x[m,m+k-1])$ for any $x\in\{0,1\}^\Z$ and any $m\in\Z$. Then $\widehat{C}$ is a continuous map commuting with $S$. Assume that for any $B\in\{0,1\}^k$ with $B(0)=0$ we have $C(B)=0$. Then $\widehat{C}(x)(m)=C(x[m,m+k-1])\leq x(m)$ for any $x\in\{0,1\}^\Z$ and any $m\in\Z$. So for any $x\in X_\eta$ we have $\widehat{C}(x)\leq x$ coordinatewise. Because $X_\eta$ is hereditary, we obtain $\widehat{C}(x)\in X_\eta$. Hence $\widehat{C}(X_\eta)\subset X_\eta$. In general, such maps are not invertible, for example the map $\widehat{C}\colon X_\eta\to X_\eta$ given by the code $C\colon\{0,1\}\to\{0,1\}$ defined by $C(0)=C(1)=0$ is non-invertible. From that point of view the class considered in our paper is completely different. Our examples are coalescent, i.e. all continuous maps commuting with $S$ are homeomorphisms (see Corollary \ref{coal}). We recall that automorphism groups of general Toeplitz subshifts need not be trivial; see, e.g., \cite{Bu-Kw}.

A $\mathscr{B}$-free system is called \emph{taut} if $\bdelta(\mathcal{M}_{\mathscr{B}\setminus \{b\}})<\bdelta(\mathcal{M}_\mathscr{B})$ for each $b\in\mathscr{B}$, where $\bdelta$ stands for the logarithmic density. As shown in \cite{AB-SK-JKP-ML}, the $\mathscr{B}$-free systems that are taut have interesting dynamical properties. In the last section, we show that $\mathscr{B}$-free systems which are minimal (equivalently, Toeplitz) are taut. Moreover, we formulate some open questions.
\section{Preliminaries}
\subsection{Toeplitz subshifts}
In this subsection, we recall the definition and properties of Toeplitz subshifts of the space $0$--$1$ sequences indexed by $\Z$.

We say that $x\in \{0,1\}^\Z$ is a \emph{Toeplitz sequence} whenever for any $n\in\Z$ there exists $s_n\in\N$ such that $x(n+k\cdot s_n)=x(n)$ for any $k\in\Z$. A subshift $(Z,S)$, $Z\subset \{0,1\}^\Z$ is said to be \emph{Toeplitz} if $Z=\ov{\mathcal{O}_S(y)}$ for some Toeplitz sequence $y\in \{0,1\}^\Z$.
\begin{defin}(due to S. Williams, \cite{MR 86k:54062})
Let $s\in\N$. By $\Per_s(x)$ we denote
$\{n\in\Z \ : \ \forall_{k\in\Z} \ x(n) = x(n + ks)\}$. (If $s'\divides s$ then $\Per_{s'}(x)\subset \Per_s(x)$.)
By an \emph{essential period} of $x$ we mean $s$ for which $\Per_{s'} (x)\neq \Per_s(x)\neq\emptyset$ for any positive integer $s'< s$. Finally, a \emph{periodic structure} of $x$ is any sequence $s = (s_m)_{m\in \N}$ of essential periods such that $s_m\divides s_{m+1}$ for each $m\in\N$ and
\begin{equation}\label{period}
\bigcup_{m\in\N} \Per_{s_m}(x) = \Z.
\end{equation}
\end{defin}
Every Toeplitz sequence has a periodic structure, which is not unique because any subsequence of a periodic structure is a periodic structure too.
As in \cite{MR2180227}, we can put $s_m$ equals the least common multiple of the minimal periods of the positions in $[-m, m]$.
\begin{defin}(see \cite{MR41426})
A Toeplitz sequence $x\in\{0,1\}^\Z$ is \emph{regular} if there exists a periodic structure $(s_m)_{m\in\N}$ of $x$ such that 
$$\lim_{m\to\infty}\frac{|\Per_{s_m}(x)\cap[0,s_m)|}{s_m}=1.$$
\end{defin}
Let $(p_t)_{t=1}^\infty$ be a periodic structure of a Toeplitz sequence $x\in\{0,1\}^\Z$ and $G$ be the inverse limit group, $G=\varprojlim\Z/p_t\Z$. That is,
$$G=\{(n_t)_{t=1}^\infty \ : \ n_t\in\Z/p_t\Z\text{ and }n_{t+1}\equiv n_t \ (\bmod \ p_t)\text{ for any }t\geq1\}.$$ 
Notice that $G$ is metrizable by the metric $$
|(n_t)_{t=1}^\infty,(n_t')_{t=1}^\infty)|=\max\left\{\frac{1}{i+1} \ : \ n_i\neq n_i'\right\},$$
for any $(n_t)_{t=1}^\infty, (n_t')_{t=1}^\infty\in G$. We denote $n(1,1,1,\ldots)$ by $\overline{n}$ for any $n\in\Z$. Let $T$ be the translation of $G$ by the unit element $\overline{1}$. Then $G$ is a~compact monothetic group with generator $\overline{1}$. In \cite{MR 86k:54062}, it is proved that the system $(G,T)$ is the \emph{maximal equicontinuous factor} of $(\ov{\mathcal{O}_S(x)},S)$, i.e. the system $(G,T)$ is the largest system such that the family of maps $\{T^n \ : \ n\in\Z\}$ is equicontinuous and there exists a continuous surjective $\pi\colon\ov{\mathcal{O}_S(x)}\to G$ such that $\pi \circ T=S\circ \pi$ ($\pi$ is called a \emph{factor map}). In other words, the system $(G,T)$ is the largest equicontinuous factor of $(\ov{\mathcal{O}_S(x)},S)$ (any other equicontinuous factor of $(\ov{\mathcal{O}_S(x)},S)$ is a~factor of $(G,T)$). Every topological dynamical system has the maximal equicontinuous factor which is unique up to isomorphism (see, e.g., \cite{MR1958753}).

Let $A_t\in\{0,1,\_\}^{p_t}$ be a block such that
\begin{equation*}
A_t(n)=\begin{cases} 
x(n), \text{ if }x(n)=x(n+kp_t) \text{ for each } k\in\Z, \\
\_, \text{ otherwise},
\end{cases}
\end{equation*}
for any $n\in\{0,1,\ldots,p_t-1\}$.
By a \emph{filled place} in $A_t$ we mean each $i\in\{0,1,\ldots,p_t-1\}$ such that $A_t(i)\in\{0,1\}$. We call the symbol $\_$ a~\emph{hole}. By the \emph{$p_t$-skeleton} of $x$ we will mean a~sequence obtained from $x$ by replacing $x(n)$ by a hole for all $n\not\in \Per_{p_t}(x)$.
If $x\in\{0,1\}^\Z$ is a~regular Toeplitz sequence then the sequence of blocks $(A_t)_{t=1}^\infty$ satisfies the following conditions:
\begin{enumerate}[(A)]
\item the block $A_{t+1}$ is obtained as a concatenation $A_t A_t\ldots A_t$, where some holes are filled by symbols $0$ or $1$,\label{cond1}
\item $\lim\limits_{t\to\infty}\nicefrac{r_t}{p_t}=1$, where $r_t$ is the number of filled places in $A_t$,\label{cond2}
\item for every $i\in\N$ there exists an index $t$ such that $A_t(i)\in\{0,1\}$.\label{cond3}
\end{enumerate}
For any given $t\geq1$ let $\{n\ : \ A_t(n)=\_\}=\{I_1^{(t)}<I_2^{(t)}<\ldots<I_{s_t}^{(t)}\}$ be the set of all positions of holes in $A_t$. 
\begin{defin}(see \cite{Bu-Kw})
We say that a Toeplitz sequence $x\in\{0,1\}^\Z$ has property $(Sh)$ (separated holes) if \begin{equation}\label{min}
k_t:=\min(\{I_{j+1}^{(t)}-I_j^{(t)}, j=1,2,\ldots,s_t-1\}\cup\{p_t-I_{s_t}^{(t)}+I_1^{(t)}\})
\end{equation} tends to $\infty$ as $t\to\infty$.
\end{defin}
As is mentioned in \cite{Bu-Kw}, each sequence of blocks $(A_t)_{t=1}^\infty$ satisfying \eqref{cond1}-\eqref{cond3} determines a~Toeplitz sequence $x$, which may be periodic.

\begin{rem}
In the case of Toeplitz sequences satisfying condition $(Sh)$, each continuous $U\colon \ov{\mathcal{O}_S(x)}\to\ov{\mathcal{O}_S(x)}$ which commutes with $S$ is a~homeomorphism. In other words, such Toeplitz subshifts are topologically coalescent (see Proposition 3 in \cite{Bu-Kw}).
\end{rem}
By a \emph{$t$-symbol} of $x$ we mean any block $A$ of length
$p_t$ such that
$A = x[\ell p_t, \ell p_t +p_t - 1]$ for some $\ell\in\Z$. For each $t$-symbol of $x$ we have $\{n\in\{0,1,\ldots,p_t\} \ : \ A(n)=A_t(n)\}=\{0,1,\ldots,p_t\}\setminus\{I_1^{(t)},I_2^{(t)},\ldots,I_{s_t}^{(t)}\}$.

Let $g=(n_t)_{t=1}^\infty\in G$. We denote by $A_t(g)$ the following block
$$
A_t(g)=A_tA_t[n_t,n_t+p_t-1].$$
Let $\{ J_1^{(t)}(g)<J_2^{(t)}(g)<\ldots<J_{s_t}^{(t)}(g)\}$ be the set of all holes in $A_t(g)$.
The sequence $(A_t(g))_{t=1}^\infty$ satisfies conditions \eqref{cond1} and \eqref{cond2} so it determines a~two-sided sequence $x(g)\in\{0,1,\_\}^\Z$ such that for any $t\geq1$ and any $0\leq i<p_t$ satisfying $A_t(g)(i)\in\{0,1\}$ we have $$
x(g)(i+\ell p_t)=A_t(g)(i),$$
for all $\ell\in\Z$. Each of the $t$-symbols of $x(g)$ coincides with $A_tA_t[n_t,n_t+p_t-1]$ except at the places $J_1^{(t)}(g), J_2^{(t)}(g),\ldots,J_{s_t}^{(t)}(g)$. By the definition of $A_t(g)$, we obtain $\{J_k^{(t)}(g)\ : \ k=1,2\ldots, s_t\}=\{I_k^{(t)}-n_t \ (\bmod \ p_t) \ : \ k=1,2,\ldots, s_t\}$. 

Let $G_0=\{g\in G \ : \ x(g) \text{ is a $0$--$1$ Toeplitz sequence}\}$ and $G_2=\{g=(n_t)_{t=1}^\infty\in G \ : \ A_t(n_t)=\_ \ \text{ for each } t\geq0\}$. Then $G_0=G\setminus G_1$,
where $G_1=G_2+\Z\overline{1}$. Moreover, $G_0$ is of Haar measure one (see \cite[p. 48]{Bu-Kw}).

Let $\pi$ be a factor map from $(\ov{\mathcal{O}_S(x)},S)$ to $(G,T)$, defined as in \cite{MR 86k:54062},
$\pi^{-1}(\{g\})=\{y\in\ov{\mathcal{O}_S(x)} \ : \ y\text{ has the same } p_t\text{-skeleton as }S^{n_t}x$ for any $t\geq1\} \text{ for any }g=(n_t)_{t=1}^\infty\in G$. Assume that a Toeplitz sequence $x\in\{0,1\}^\Z$ has property $(Sh)$. Then $\pi^{-1}(\{g\})$ contains at most two elements for any $g\in G$ (see Remark $4$ in \cite{Bu-Kw}) and $\pi^{-1}(\{g\})=\{x(g)\}$ for any $g\in G_0$ (see Remark $2$ in \cite{Bu-Kw}). 

If $U\in C(S)$ then $U$ induces a continuous map $U'$ on $G$ commuting with $T$. Indeed, notice that $\overline{0}\in G_0$ because $x(\overline{0})=x$. Put 
\begin{equation}\label{U}
U'T^n\overline{0}=\pi US^nx
\end{equation} for any $n\in\Z$. Because $\pi$ is a factor map and $U\in C(S)$, we have $\pi US^nx=\pi S^nUx=T^n\pi Ux=\overline{n}+\pi Ux=\overline{n}+U'\overline{0}$. Hence $U'\colon\{T^n\overline{0} \ : \ n\in\Z\}\to G$ is the restriction of the translation by $U'\overline{0}$. Because $G$ is a monothetic group, $U'$ is the translation by $g_0:=U'\overline{0}$. In this case it is natural to say that $g_0$ can be lifted to $U$. 
Notice that if $g_0\in G$ can be lifted to $U$, then $U$ is unique. Indeed, let $U_1\neq U_2\in C(S)$ induce a map $U'\in C(T)$. Then because $(\ov{\mathcal{O}_S(x)}, S)$ is minimal, $U_1y\neq U_2y$ for any $y\in\ov{\mathcal{O}_S(x)}$. Notice that $U'\pi=\pi U_1=\pi U_2$. So $U'g\in G_1$ for any $g\in G$ but $G_1$ is of Haar measure zero. This contradicts that $U'$ is the translation. The question arises which elements $g_0\in G$ can be lifted to elements of $C(S)$. 
\subsection{General lemmas}
In this subsection, we include the general facts about arithmetic progressions and $\mathscr{B}$-free systems which will be used later.
\begin{lem}\label{kogruencja}
Let $b\in\Z$ and $a,m\in\Z\setminus\{0\}$. Then a congruence 
\begin{equation}\label{kong}
ax\equiv b \ (\bmod \ m)
\end{equation}
has a solution $x\in\Z$ if and only if $\gcd(a,m)\divides b$.
\end{lem}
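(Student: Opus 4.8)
The plan is to rewrite the congruence as a linear Diophantine equation and then apply Bézout's identity. First I would note that \eqref{kong} admits a solution $x\in\Z$ if and only if there exists $y\in\Z$ with $ax-my=b$, since $ax\equiv b\ (\bmod\ m)$ means precisely that $m\divides ax-b$. Setting $d=\gcd(a,m)$, this reformulation makes both implications transparent.

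For the forward direction, I would assume that such $x$ (together with the corresponding $y$) exists. As $d\divides a$ and $d\divides m$, the integer $d$ divides every $\Z$-linear combination of $a$ and $m$, and in particular it divides $ax-my=b$; hence $\gcd(a,m)\divides b$.

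For the converse I would invoke Bézout's identity, which supplies $u,v\in\Z$ with $au+mv=d$. Assuming $\gcd(a,m)\divides b$, write $b=dc$ for some $c\in\Z$ and put $x=uc$. Then $ax-b=a(uc)-dc=(au-d)c=-mvc$, so $m\divides ax-b$, i.e.\ $x$ solves \eqref{kong}.

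Since this is a classical fact about linear congruences, I do not anticipate a genuine obstacle: the only non-elementary ingredient is Bézout's identity (equivalently, the fact that $a\Z+m\Z=d\Z$), after which both directions reduce to a one-line computation.
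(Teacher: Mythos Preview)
Your proof is correct and follows essentially the same route as the paper: both directions rely on writing $b$ as a $\Z$-linear combination of $a$ and $m$, with the converse appealing to B\'ezout's identity (the paper phrases this as the Extended Euclidean Algorithm producing $k$ with $ka\equiv\gcd(a,m)\ (\bmod\ m)$, then multiplies by $b/\gcd(a,m)$, which is exactly your $x=uc$).
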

\begin{proof}
Let $x\in\Z$ be a solution of \eqref{kong}. Then $\gcd(a,m)\divides ax$ and $\gcd(a,m)\divides \ell m$ for any $\ell\in\Z$. Hence $\gcd(a,m)\divides b$ because $b$ is a difference of $ax$ and some multiple of $m$.

Assume that $\gcd(a,m)\divides b$. By the Euclid's Extended Algorithm, there exists $k\in\Z$ such that $ka\equiv\gcd(a,m)\ (\bmod \ m)$. Then 
$$ka\frac{b}{\gcd(a,m)}\equiv b \ (\bmod \ m).$$
So, $x=\frac{kb}{\gcd(a,m)}$ is a solution of \eqref{kong}. This completes the proof.
\end{proof}
In \cite{AB-SK-JKP-ML}, the characterization of periodicity of the characteristic function of $\mathscr{B}$-free numbers is given by the following lemma 
\begin{lem}[see Proposition 4.25 in \cite{AB-SK-JKP-ML}]\label{periodic}
Let $\mathscr{B}\subset \N$ be primitive, i.e. for any $b,b'\in\mathscr{B}$ if $b\divides b'$ then $b=b'$. Then $\mathscr{B}$ is finite if and only if $\eta$ is
periodic, with the minimal period $\lcm(\mathscr{B})$.
\end{lem}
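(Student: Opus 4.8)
The plan is to reduce the whole statement to a single divisibility principle: \emph{if $\eta$ is periodic with period $p$, then $b\mid p$ for every $b\in\mathscr{B}$.} Granting this, both implications and the value of the minimal period follow at once. For the forward implication, suppose $\mathscr{B}$ is finite and set $L=\lcm(\mathscr{B})$. Since $b\mid L$ for each $b$, one has $b\mid n\iff b\mid (n+L)$, so $\mathcal{M}_\mathscr{B}$, and hence $\eta$, is $L$-periodic; this already shows $\eta$ is periodic. I would dispose of the degenerate case $1\in\mathscr{B}$ at the very start: primitivity then forces $\mathscr{B}=\{1\}$, so $\mathcal{M}_\mathscr{B}=\Z$, $\eta\equiv 0$, and everything is trivial; thus I may assume every $b>1$.

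The heart of the matter is the divisibility principle, and I expect this to be the main obstacle, chiefly because $\mathscr{B}$ may be \emph{infinite}, so any construction must avoid quantifying over all of $\mathscr{B}$ simultaneously. I would argue by contradiction: fix $b\in\mathscr{B}$ with $b\nmid p$ and put $d=\gcd(b,p)$, which is then a \emph{proper} divisor of $b$ satisfying $d\mid p$. Two observations combine. First, $d\in\mathcal{F}_\mathscr{B}$: if some $b'\in\mathscr{B}$ divided $d$, then $b'\mid b$ (as $d\mid b$), so $b'=b$ by primitivity, contradicting $b'\le d<b$; hence $\eta(d)=1$. Second, since $\gcd(p,b)=d$ divides $d$, Lemma~\ref{kogruencja} produces a solution $k$ of $kp\equiv -d\pmod b$, so $m:=d+kp$ is a multiple of $b$ and $\eta(m)=0$. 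But $m\equiv d\pmod p$, and $p$-periodicity forces $\eta(m)=\eta(d)=1$, a contradiction. The decisive point is that this requires \emph{no} control over the remaining elements of $\mathscr{B}$: one free witness $d$ and one multiple $m$ lying in the same residue class modulo $p$ already break periodicity, which is exactly what lets the argument survive an infinite $\mathscr{B}$.

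With the principle established I would finish cleanly. If $\eta$ is periodic with some period $p$, then every $b\in\mathscr{B}$ divides $p$, so $\mathscr{B}$ is contained in the finite set of divisors of $p$ and is therefore finite; this is the reverse implication. For the minimal period, assume $\mathscr{B}$ finite, let $p_0$ be the least period, and $L=\lcm(\mathscr{B})$. The minimal period divides every period (the periods form a subgroup of $\Z$), so $p_0\mid L$ because $L$ is a period; conversely the divisibility principle applied to $p_0$ gives $b\mid p_0$ for every $b\in\mathscr{B}$, whence $L=\lcm(\mathscr{B})\mid p_0$. Therefore $p_0=L$, completing the proof.
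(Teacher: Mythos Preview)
The paper does not supply its own proof of this lemma: it is stated with a citation to Proposition~4.25 in \cite{AB-SK-JKP-ML} and immediately followed by the next lemma, so there is no in-paper argument to compare against.

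Your proof is correct and self-contained. The reduction to the single divisibility principle ``if $\eta$ has period $p$ then $b\mid p$ for every $b\in\mathscr{B}$'' is the right organizing idea, and your proof of that principle is clean: for $b\nmid p$ the number $d=\gcd(b,p)$ is a proper divisor of $b$, hence $\mathscr{B}$-free by primitivity, while Lemma~\ref{kogruencja} produces a multiple of $b$ in the class $d+p\Z$, contradicting $p$-periodicity. The remaining steps---finiteness of $\mathscr{B}$ from $\mathscr{B}\subset\{\text{divisors of }p\}$, and the two divisibilities $p_0\mid L$ and $L\mid p_0$ pinning down the minimal period---are routine. Disposing of the case $1\in\mathscr{B}$ at the outset is also appropriate, since the inequality $b'\le d<b$ used to rule out $b'\mid d$ needs $b>1$.
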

\begin{lem}\label{inter}
Let $a,b,r\in\N$. If $\gcd(a,b)\divides r$ then 
\begin{equation}\label{equal}
(a\Z+r)\cap b\Z=\lcm(a,b)\Z+bs
\end{equation}
for some $s\in\Z$ such that $bs\equiv r \ (\bmod \ a)$. Moreover, if $\gcd(a,b)\ndivides r$ then $(a\Z+r)\cap b\Z=\emptyset$.
\end{lem}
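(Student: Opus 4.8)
The plan is to reduce the intersection to a single linear congruence and then invoke Lemma~\ref{kogruencja}. First I would observe that $n\in(a\Z+r)\cap b\Z$ precisely when $n\equiv 0\ (\bmod\ b)$ and $n\equiv r\ (\bmod\ a)$. Writing $n=bt$ with $t\in\Z$, the first condition becomes automatic and the second turns into the congruence $bt\equiv r\ (\bmod\ a)$. By Lemma~\ref{kogruencja}, applied with coefficient $b$, modulus $a$ and right-hand side $r$, this congruence is solvable in $t$ if and only if $\gcd(a,b)\divides r$. This immediately settles the last assertion: when $\gcd(a,b)\ndivides r$ there is no admissible $t$, hence no such $n$, so $(a\Z+r)\cap b\Z=\emptyset$.

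Assume now $\gcd(a,b)\divides r$ and set $d=\gcd(a,b)$. Lemma~\ref{kogruencja} yields a particular solution $s\in\Z$ with $bs\equiv r\ (\bmod\ a)$; then $bs\in(a\Z+r)\cap b\Z$, which already produces the representative $bs$ figuring in the claimed answer. The key step is to describe \emph{all} solutions $t$ of $bt\equiv r\ (\bmod\ a)$. I would argue that they form exactly one residue class modulo $a/d$: if $bt\equiv bs\ (\bmod\ a)$ then $a\divides b(t-s)$, and dividing through by $d$ gives $(a/d)\divides (b/d)(t-s)$; since $\gcd(a/d,b/d)=1$, this forces $(a/d)\divides (t-s)$, that is $t\equiv s\ (\bmod\ a/d)$. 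Conversely, every $t$ with $t\equiv s\ (\bmod\ a/d)$ is a solution.

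Finally I would translate this back to $n=bt$. Writing $t=s+k(a/d)$ with $k\in\Z$, we obtain $n=bs+k(ab/d)=bs+k\lcm(a,b)$, using the identity $\lcm(a,b)=ab/\gcd(a,b)$. Hence $(a\Z+r)\cap b\Z=\lcm(a,b)\Z+bs$, and by construction $bs\equiv r\ (\bmod\ a)$, which is exactly \eqref{equal}. The only genuinely delicate point is the cancellation $\gcd(a/d,b/d)=1$, which is what reduces the modulus from $a$ to $a/d$ and thereby turns the step size $a$ on the level of $t$ into the step size $\lcm(a,b)$ on the level of $n$; everything else is bookkeeping with $\lcm(a,b)\gcd(a,b)=ab$.
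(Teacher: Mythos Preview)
Your proof is correct and follows essentially the same strategy as the paper: both invoke Lemma~\ref{kogruencja} to produce a particular element $bs\in(a\Z+r)\cap b\Z$ with $bs\equiv r\ (\bmod\ a)$, and both then argue that the intersection is a single coset of $\lcm(a,b)\Z$. The only cosmetic difference is in that last step: the paper takes an arbitrary $x$ in the intersection and observes that $x-bs\in a\Z\cap b\Z=\lcm(a,b)\Z$, whereas you parametrize $n=bt$ and describe all solutions of $bt\equiv r\ (\bmod\ a)$ via the coprimality $\gcd(a/d,b/d)=1$; these are two phrasings of the same fact.
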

\begin{proof}
Assume that $\gcd(a,b)\divides r$. Then by Lemma \ref{kogruencja}, there exists $k\in\Z$ such that $ak+r\equiv 0 \ (\bmod \ b)$. Hence there exists $s\in\Z$ such that $ak+r=bs$. Then $\lcm(a,b)\Z+bs\subset(a\Z+r)\cap b\Z$. Let $x\in(a\Z+r)\cap b\Z$. Then there exist $k',s'\in\Z$ such that $x=bs'=ak'+r$. Moreover, $x-bs\in b\Z\subset \lcm(a,b)\Z$ and $x-(ak+r)=a(k-k')\in a\Z\subset\lcm(a,b)\Z$. Hence $x\in\lcm(a,b)\Z+bs$. Thus, equality \eqref{equal} holds.

Assume that $\gcd(a,b)\ndivides r$ and $x\in(a\Z+r)\cap b\Z$. Notice that $b\divides x$ and $\gcd(a,b)\divides a$. Hence $\gcd(a,b)\divides r$. But this is a contradiction. The assertion follows.
\end{proof}
\begin{lem}\label{intersect}
Let $a,r,m\in\N$. Then $|(a\Z+r)\cap[0,ma)|=m$.
\end{lem}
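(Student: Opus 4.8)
The plan is to count the integers $k$ for which $ak+r$ lands in $[0,ma)$. Since $a\Z+r=\{ak+r : k\in\Z\}$, an element $ak+r$ lies in $[0,ma)$ precisely when $0\le ak+r<ma$, that is, when $k$ belongs to the half-open interval $[-r/a,\,m-r/a)$. This interval has length exactly $m$, and a half-open real interval whose length is a positive integer contains exactly that many integers, independently of the location of its endpoints. Hence there are exactly $m$ admissible values of $k$, and as the map $k\mapsto ak+r$ is injective, this gives $|(a\Z+r)\cap[0,ma)|=m$.

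A more transparent variant would be to partition the interval into $m$ consecutive blocks, $[0,ma)=\bigcup_{j=0}^{m-1}[ja,(j+1)a)$, each consisting of $a$ consecutive integers. In any run of $a$ consecutive integers exactly one is congruent to $r$ modulo $a$, so each block contributes exactly one element of $a\Z+r$. Summing over the $m$ pairwise disjoint blocks yields the count $m$.

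There is no genuine obstacle here, the statement being an elementary counting fact. The only point requiring (trivial) care is that $r$ need not be smaller than $a$, but both arguments above are insensitive to the size of $r$: the length-of-interval count does not reference $r$ beyond translating the endpoints, and the block count uses only that each residue class modulo $a$ meets each length-$a$ block exactly once. I would present whichever of the two formulations fits best with the notation used in the subsequent applications of this lemma.
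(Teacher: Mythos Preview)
Your proposal is correct, and your second formulation---partitioning $[0,ma)$ into the $m$ blocks $[ja,(j+1)a)$ and observing that each contains exactly one element of the residue class $a\Z+r$---is precisely the argument given in the paper (which additionally reduces to $0\le r<a$ first, a step you rightly note is unnecessary). Your first formulation, counting integers in the half-open interval $[-r/a,\,m-r/a)$ of length $m$, is a mild repackaging of the same inequality manipulation.
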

\begin{proof}
Note that for any non-negative integer $i$ we have 
\begin{align*}
&n\in(a\Z+r)\cap[ia,(i+1)a)\iff ia\leq n=as+r<(i+1)a \\
&\text{ for some } s\in\Z.
\end{align*}
Without loss of generality, we can assume that $0\leq r<a$ because $a\Z+r=a\Z+(r \bmod a)$.
Notice that $ai+r\in(a\Z+r)\cap[ia,(i+1)a)$.
Assume that $n=as+r\in(a\Z+r)\cap[ia,(i+1)a)$. Then $s$ is an integer satisfying the following inequalities
$$i-\frac{r}{a}\leq s < i+1-\frac{r}{a}.$$
 Then $0\leq\frac{r}{a}<1$, so $i-1<s\leq i$. This implies that the intersecion $(a\Z+r)\cap[ia,(i+1)a)$ has exactly one element for any $0\leq i< m$. The interval $[0,ma)$ is the disjoint union of $[ia,(i+1)a)$ for $0\leq i< m$. Hence the assertion holds.
\end{proof}

\section{Subfamily of Toeplitz $\mathscr{B}$-free systems}
Let $\mathscr{B}=\{2^ib_i\}_{i\in\N}$, where $\{b_i\}_{i\in\N}$ is a set of pairwise coprime odd numbers greater than $1$. We say that $n\in\Z$ is \emph{$\mathscr{B}$-free number} if $n\not\in\bigcup\limits_{i\in\N}2^ib_i\Z$. By $\mathcal{F}_{\mathscr{B}}$ we will denote the set of all $\mathscr{B}$-free numbers and by $\eta$ its characteristic function. This means that 
\begin{equation}\label{Z:2}
\eta(n)=\begin{cases}
1,& \text{if }n \text{ is }\mathscr{B}\text{-free},\\
0,& \text{otherwise}.
\end{cases}
\end{equation}
for any $n\in\Z$.

Let $p_t=2^tb_1b_2\ldots b_t$ for any $t\in\N$. We will prove that $\eta$ is a~Toeplitz sequence and $(p_t)_{t\in\N}$ is an example of its periodic structure.
\begin{lem}[see Example 3.1. in \cite{AB-SK-JKP-ML}] \label{T}
The sequence $\eta$ is a Toeplitz sequence.
\end{lem}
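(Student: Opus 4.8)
The plan is to verify the defining property of a Toeplitz sequence directly: for each $n\in\Z$ I exhibit a period $s_n$, which I take of the form $p_t=2^tb_1\cdots b_t$ with $t$ large, and I check that $\eta(n+kp_t)=\eta(n)$ for every $k\in\Z$. The basic reduction is that whenever $d\divides p_t$ one has $d\divides m\iff d\divides m+kp_t$; since $2^ib_i\divides p_t$ for every $i\le t$ (because $2^i\divides 2^t$ and $b_i\divides b_1\cdots b_t$), the divisibility of $n+kp_t$ by $2^ib_i$ is independent of $k$ and coincides with that of $n$ for all such $i$. Hence only the tail indices $i>t$ need to be controlled, and the proof splits according to the value of $\eta(n)$.

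If $\eta(n)=0$, then $n$ is not $\mathscr{B}$-free (this includes $n=0$, which lies in every $2^ib_i\Z$), so $2^{i_0}b_{i_0}\divides n$ for some $i_0$. Taking $t\ge i_0$ gives $2^{i_0}b_{i_0}\divides p_t$, whence $2^{i_0}b_{i_0}\divides n+kp_t$ for all $k$; thus each $n+kp_t$ is a multiple and $\eta(n+kp_t)=0=\eta(n)$, so $s_n=p_t$ works.

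The substantive case is $\eta(n)=1$, i.e.\ $n$ is $\mathscr{B}$-free; then $n\ne0$, so its $2$-adic valuation is finite and I may fix $t$ with $2^t\ndivides n$. For $i\le t$ the reduction above already yields $2^ib_i\ndivides n+kp_t$ for all $k$. For $i>t$ I use that the $b_i$ are odd and pairwise coprime to compute $\gcd(2^ib_i,p_t)=\gcd(2^i,2^t)\,\gcd(b_i,b_1\cdots b_t)=2^t$. Since $2^t\ndivides n$, Lemma~\ref{kogruencja} shows the congruence $p_tk\equiv -n\ (\bmod\ 2^ib_i)$ has no solution, so no $k$ makes $n+kp_t$ divisible by $2^ib_i$ (for $n\ge0$ this is exactly the emptiness clause of Lemma~\ref{inter}). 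Combining both ranges of $i$, every $n+kp_t$ avoids all the multiples and is therefore $\mathscr{B}$-free, giving $\eta(n+kp_t)=1=\eta(n)$.

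I expect the only real obstacle to be this last case, where a single period $p_t$ must simultaneously neutralise the infinitely many divisibility constraints indexed by $i>t$. What makes this feasible is the identity $\gcd(2^ib_i,p_t)=2^t$, resting on the oddness and pairwise coprimality of the $b_i$; choosing $t$ larger than the $2$-adic valuation of $n$ then forces all of these residue classes to miss the progression $p_t\Z+n$ at once. The remainder is routine bookkeeping with Lemmas~\ref{kogruencja} and \ref{inter}.
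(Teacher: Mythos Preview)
Your proof is correct and follows essentially the same approach as the paper: both choose $s_n=p_t$ with $t$ exceeding the $2$-adic valuation of $n$ and use that $2^ib_i\divides p_t$ for $i\le t$, while ruling out divisibility by $2^ib_i$ for $i>t$ via a $2$-adic argument. The only cosmetic difference is that the paper argues directly that any putative divisor $2^jb_j$ of $n+kp_t$ must satisfy $j\le a$ (by reading off the $2$-adic valuation of $n+kp_t$), whereas you compute $\gcd(2^ib_i,p_t)=2^t$ and invoke Lemma~\ref{kogruencja}; these are two phrasings of the same observation.
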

\begin{proof}
Let $n\in\Z$. 
\begin{itemize}
\item If $\eta(n)=0$ then there exists $j\in\N$ such that $2^jb_j\divides n$. Notice that $2^jb_j\divides n+p_j\ell=2^jb_j(\frac{n}{2^jb_j}+b_1b_2\ldots b_{j-1}\ell)$ for any $\ell\in\Z$. So $\eta(n+p_j\ell)=0$ for any $\ell\in\Z$. Hence $n\in \Per_{p_j}(\eta)$.
\item If $\eta(n)=1$ then 
\begin{equation}\label{eta1}
\eta(n+p_{a+1}\ell)=1\text{ for any }\ell\in\Z,
\end{equation}
where $a$ is a non-negative integer and $m$ is an odd integer such that $n=2^am$. Suppose that \eqref{eta1} does not hold. So for some $j\in\N$, we have
\begin{equation}\label{e2}
2^jb_j\divides n+p_{a+1}\ell\text{ for some }\ell\in\Z.
\end{equation}
Notice $2^{a+1}\ndivides n+p_{a+1}\ell=2^a(m+2b_1b_2\ldots b_{a+1}\ell)$. So $j\leq a$ and by~\eqref{e2}, we have $2^jb_j \divides n$, which contradicts $\eta(n)=1$. Hence \eqref{eta1} holds. This implies $n\in \Per_{p_{a+1}}(\eta)$. 
\end{itemize}
The assertion follows.
\end{proof} 
\begin{lem}\label{essential}
For any $t\in\N$ the number $p_t$ is an essential period of $\eta$.
\end{lem}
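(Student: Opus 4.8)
The plan is to verify directly the two requirements in the definition of an essential period: that $\Per_{p_t}(\eta)\neq\emptyset$ and that $\Per_{s'}(\eta)\neq\Per_{p_t}(\eta)$ for every positive integer $s'<p_t$. The first is immediate, since $0\in\Per_{p_t}(\eta)$: as $2b_1\divides p_t$, every multiple of $p_t$ lies in $2b_1\Z\subset\mathcal{M}_{\mathscr{B}}$, so $\eta(kp_t)=0=\eta(0)$ for all $k$. For the second requirement the key reduction I would make is that it suffices to treat the case when $s'$ \emph{divides} $p_t$. Indeed, suppose $\Per_{s'}(\eta)=\Per_{p_t}(\eta)=:P$ and put $d=\gcd(s',p_t)=\alpha s'+\beta p_t$ with $\alpha,\beta\in\Z$. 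Since $\Per_{s'}(\eta)$ is $s'$-periodic and $\Per_{p_t}(\eta)$ is $p_t$-periodic as subsets of $\Z$, for $n\in P$ the point $n+\alpha s'$ again lies in $\Per_{s'}(\eta)=P$ and $\eta(n+\alpha s')=\eta(n)$; moving from $n+\alpha s'\in\Per_{p_t}(\eta)$ by $\beta p_t$ preserves the value of $\eta$ as well, whence $n+d\in P$ and $\eta(n+d)=\eta(n)$. Iterating gives $n\in\Per_d(\eta)$, so $P\subset\Per_d(\eta)$; the reverse inclusion holds because $d\divides p_t$. Thus $\Per_d(\eta)=\Per_{p_t}(\eta)$ with $d\divides p_t$ and $d\leq s'<p_t$, so it is enough to prove $\Per_d(\eta)\subsetneq\Per_{p_t}(\eta)$ for every proper divisor $d$ of $p_t$.

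Before constructing witnesses I would record two membership facts, both from Lemma~\ref{inter}. First, if $2^t\ndivides n$ then $n\in\Per_{p_t}(\eta)$: for $i\leq t$ we have $2^ib_i\divides p_t$, so divisibility of $n+p_t\ell$ by $2^ib_i$ is independent of $\ell$, while for $i>t$ we have $\gcd(p_t,2^ib_i)=2^t\ndivides n$, so $(n+p_t\Z)\cap 2^ib_i\Z=\emptyset$ by Lemma~\ref{inter}; hence $\eta$ is constant along $n+p_t\Z$. Second, if $2^jb_j\divides n$ for some $j\leq t$ then $\eta\equiv0$ on $n+p_t\Z$ (again since $2^jb_j\divides p_t$), so $n\in\Per_{p_t}(\eta)$; in particular $2^tb_j\in\Per_{p_t}(\eta)$ for each $j\leq t$.

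Now fix a proper divisor $d\divides p_t$. Using that $2,b_1,\dots,b_t$ are pairwise coprime, write $d=2^e\prod_{j=1}^{t}c_j$ with $0\leq e\leq t$ and $c_j\divides b_j$; since $d<p_t$, either $e<t$ or $c_{j_0}<b_{j_0}$ for some $j_0\leq t$. If $e<t$, I would take $n=2^e$, which is $\mathscr{B}$-free (so $\eta(n)=1$) and lies in $\Per_{p_t}(\eta)$ because $2^t\ndivides n$. As $\gcd(d,2^{t+1}b_{t+1})=2^e\divides 2^e$, Lemma~\ref{inter} yields a multiple of $2^{t+1}b_{t+1}$ in $n+d\Z$, a non-$\mathscr{B}$-free point; hence $\eta$ is non-constant on $n+d\Z$ and $n\notin\Per_d(\eta)$. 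If instead $e=t$ and $c_{j_0}<b_{j_0}$, I would take $n=2^tb_{j_0}\in\Per_{p_t}(\eta)$ with $\eta(n)=0$. The elements of $n+d\Z$ have the form $2^tw_\ell$ with $w_\ell=b_{j_0}+(\prod_{j}c_j)\ell$, and such a point is $\mathscr{B}$-free once $w_\ell$ is odd and divisible by no $b_i$ with $i\leq t$. Oddness is a congruence mod $2$; for $i=j_0$ the condition $b_{j_0}\ndivides w_\ell$ is solvable precisely because $\gcd(\prod_j c_j,b_{j_0})=c_{j_0}<b_{j_0}$; for $i\neq j_0$ one has either $c_i=b_i$ (and then $w_\ell\equiv b_{j_0}\not\equiv0\ (\bmod\ b_i)$ automatically) or at least one admissible residue class mod $b_i$. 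By the Chinese Remainder Theorem over the pairwise coprime moduli $2,b_1,\dots,b_t$ these finitely many conditions are simultaneously satisfiable, producing a $\mathscr{B}$-free point in $n+d\Z$; thus $n\notin\Per_d(\eta)$. In both cases $\Per_d(\eta)\subsetneq\Per_{p_t}(\eta)$, which completes the argument.

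The step I expect to require the most care is the subcase $e=t$ with the deficiency in the last factor, $j_0=t$. Here every element of $n+d\Z$ is divisible by $2^t$, so the missing part of $b_t$ cannot be seen through a point of low $2$-valuation; the remedy is to use the high-valuation witness $n=2^tb_t\in\Per_{p_t}(\eta)$, whose $d$-orbit escapes divisibility by $b_t$ exactly because $c_t<b_t$. The reduction to divisors via the Bézout identity is the other point on which the whole scheme hinges, and it is the part I would state most carefully.
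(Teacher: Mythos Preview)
Your proof is correct, but it is organized quite differently from the paper's. The paper does \emph{not} reduce to divisors of $p_t$; it attacks an arbitrary $s<p_t$ directly, writing $s=m\cdot 2^a$ with $m$ odd and splitting into three cases according to how many of $b_1,\dots,b_t$ divide $s$: (I) some $b_i$ with $i\le t-1$ fails to divide $s$, where the witness is $2^{t-1}\gcd(b_i,s)$; (II) $b_1,\dots,b_{t-1}\mid s$ but $b_t\nmid s$, where the witness is $2^tb_t$; (III) $b_1,\dots,b_t\mid s$ (forcing $a<t$), again with witness $2^tb_t$. Each case is checked by bare-hands divisibility arguments without CRT.

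Your B\'ezout reduction to proper divisors $d\mid p_t$ is a genuine simplification: it collapses the case analysis to just two situations (deficient $2$-part versus full $2$-part with a deficient odd factor), and in each one the witness is chosen with a single principle in mind rather than tailored to the particular shape of $s$. The price you pay is the CRT step in the $e=t$ case, which the paper avoids by exploiting that in its cases (II) and (III) the specific shift $2^{t+1}s$ or $s$ already works. Conversely, the paper's approach needs a separate argument for case (I) with a less obvious witness $2^{t-1}\gcd(b_i,s)$, whereas your $n=2^e$ handles all ``low $2$-valuation'' situations uniformly. Both arguments use the existence of $b_{t+1}$ at the same juncture (you via $\gcd(d,2^{t+1}b_{t+1})=2^e$, the paper via the congruence $p_tn+s\equiv0\ (\bmod\ 2^{t+1}b_{t+1})$ in the characterization of holes elsewhere; in this lemma the paper does not actually invoke $b_{t+1}$, staying within $b_1,\dots,b_t$). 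Your remark at the end about the delicate subcase $e=t$, $j_0=t$ is apt, and your solution matches the paper's choice of $2^tb_t$ in its case (II).
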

\begin{proof}
Let $s<p_t$ be a positive integer, $m$ be odd and $a$ be a non-negative integer such that $s=m2^a$. Three cases appear:
\begin{enumerate}[(I)]
\item there exists $1\leq i\leq t-1$ such that $\gcd(b_i,s)<b_i$,\label{pierwszy}
\item $\gcd(b_j,s)=b_j$ for all $1\leq j\leq t-1$ and $\gcd(b_t,s)<b_t$,\label{drugi}
\item $\gcd(b_j,s)=b_j$ for all $1\leq j\leq t$.\label{trzeci}
\end{enumerate}
Assume that condition \eqref{pierwszy} holds. Let $b_i'=\gcd(b_i,s)$ and $\ell\in\Z$. We claim that 
\begin{equation}\label{eta}
\eta(2^{t-1}b_i'+p_t\ell)=1.
\end{equation}
Suppose not, so that for some $j\in\N$, we have $2^jb_j\divides 2^{t-1}b_i'+p_t\ell$. Because $b_i'$ is odd, we have $2^t\ndivides 2^{t-1}b_i'+p_t\ell=2^{t-1}(b_i'+2b_1b_2\ldots b_t\ell)$. Hence $j<t$ and because $b_j$ is an odd number greater than $1$, we obtain $b_j\divides b_i'+2b_1b_2\ldots b_t\ell$ which implies $b_j\divides b_i'\divides b_i$. Because $\{b_r\}_{r\in\N}$ is a set of pairwise coprime integers, $j=i$ but then $b_i=b_i'$ which contradicts condition \eqref{pierwszy}. Hence equality \eqref{eta} holds. Moreover, we have 
\begin{equation}\label{podziel}
 2^{\min(a,i)}b_i'=\gcd(2^ib_i,s)\divides 2^{t-1}b_i'.
\end{equation} 
Indeed, notice that $2^i$ and $b_i$ are coprime. Hence we have $\gcd(2^ib_i,s)=\gcd(2^i,s)\gcd(b_i,s)=2^{\min(a,i)}b_i'$. Because $i\leq t-1$, we have $\min(a,i)\leq t-1$. This implies that \eqref{podziel} holds.
By Lemma \ref{kogruencja} and \eqref{podziel}, there exists $n\in\Z$ such that $2^{t-1}b_i'+sn\equiv 0 \ (\bmod \ 2^ib_i)$, so $\eta(2^{t-1}b_i'+sn)=0$. Hence $2^{t-1}b_i'\in \Per_{p_t}(\eta)$ but $2^{t-1}b_i'\notin \Per_s(\eta)$.

Assume that condition \eqref{drugi} holds. Then 
\begin{equation}\label{es}
\eta(2^tb_t+2^{t+1}s)=1.
\end{equation}
Suppose not, so there exists $j\in\N$ such that $2^jb_j\divides 2^tb_t+2^{t+1}s$. Because $b_t$ is odd, we have $2^{t+1}\ndivides2^tb_t+2^{t+1}s=2^t(b_t+2s)$. Hence $j\leq t$ and because $b_j$ is an odd number greater than 1, we obtain $b_j\divides b_t+2s$. If $j<t$ then by condition \eqref{drugi}, we have $b_j\divides s$ which implies $b_j\divides b_t$. But this is impossible because $\{b_r\}_{r\in\N}$ is a set of pairwise coprime integers. So $j=t$ and $b_t\divides 2s$. But this contradicts condition \eqref{drugi}. Hence equation \eqref{es} holds. Moreover, note $2^tb_t\divides 2^tb_t+p_t\ell=2^tb_t(1+b_1b_2\ldots b_{t-1}\ell)$ for any $\ell\in\Z$. So $2^tb_t\in \Per_{p_t}(\eta)$ but $2^tb_t\notin \Per_s(\eta)$. 

Assume that condition \eqref{trzeci} holds. Then $a<t$ because $b_1,b_2,\ldots, b_t$ are pairwise coprime and $s<p_t=2^tb_1b_2\ldots b_t$. We claim that 
\begin{equation}\label{eta2}
\eta(2^tb_t+s)=1.
\end{equation}
Suppose not, so there exists $j\in\N$ such that $2^jb_j\divides2^tb_t+s$. Notice $2^{a+1}\ndivides2^tb_t+s=2^a(2^{t-a}b_t+m)$. Hence $j\leq a<t$. Because $b_j$ is odd, we obtain $b_j\divides2^{t-a}b_t+m$ which together with \eqref{trzeci} implies $b_j\divides b_t$. This is impossible because $b_j$ and $b_t$ are coprime. So equality \eqref{eta2} holds. Hence $2^tb_t\notin \Per_s(\eta)$ but $2^tb_t\in \Per_{p_t}(\eta)$.

By above $p_t$ is an essential period of $\eta$.
\end{proof}
\begin{lem}\label{structure}
The sequence $\eta$ is a Toeplitz sequence with a periodic structure $(p_t)_{t\geq1}$.
\end{lem}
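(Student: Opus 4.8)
The plan is to check directly that $(p_t)_{t\geq1}$ meets every requirement in the definition of a periodic structure. Three things must be verified: that each $p_t$ is an essential period, that $p_t\divides p_{t+1}$ for all $t$, and that the periods together cover $\Z$, i.e.\ the identity \eqref{period} holds for $\eta$ with $s_m=p_m$. The Toeplitz property of $\eta$ itself is Lemma \ref{T}, so once a periodic structure is exhibited the lemma follows at once.

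The first two points are almost immediate. That $p_t$ is an essential period for every $t$ is precisely Lemma \ref{essential}. The divisibility is a one-line computation: since $p_t=2^tb_1b_2\ldots b_t$, we have $p_{t+1}=2^{t+1}b_1b_2\ldots b_{t+1}=(2b_{t+1})p_t$, so $p_t\divides p_{t+1}$ for each $t\in\N$.

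For the covering condition \eqref{period} I would simply reuse the argument already carried out in the proof of Lemma \ref{T}, where it was shown that each $n\in\Z$ belongs to some $\Per_{p_j}(\eta)$. Concretely: if $\eta(n)=0$, then $2^jb_j\divides n$ for some $j\in\N$, and the divisibility $2^jb_j\divides n+p_j\ell$ established there gives $n\in\Per_{p_j}(\eta)$; if $\eta(n)=1$, writing $n=2^am$ with $m$ odd, the same proof shows $\eta(n+p_{a+1}\ell)=1$ for all $\ell$, i.e.\ $n\in\Per_{p_{a+1}}(\eta)$. In either case $n$ lies in $\bigcup_{t\in\N}\Per_{p_t}(\eta)$, so this union is all of $\Z$.

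I expect no serious obstacle, since the substantive work was already done in Lemmas \ref{T} and \ref{essential}. The proof of Lemma \ref{structure} is therefore mostly bookkeeping: it assembles those two results together with the trivial divisibility $p_t\divides p_{t+1}$ and the observation that the covering property \eqref{period} is already implicit in the case analysis of Lemma \ref{T}.
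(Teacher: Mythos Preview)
Your proposal is correct and mirrors the paper's own proof almost exactly: the paper too invokes Lemma~\ref{essential} for essentiality, observes $p_{t+1}=2b_{t+1}p_t$ for the divisibility, and appeals to the case analysis in the proof of Lemma~\ref{T} for the covering condition~\eqref{period}. Your write-up is if anything more explicit than the paper's, which simply says ``by arguments using in the proof of Lemma~\ref{T}''.
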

\begin{proof}
Let $t\in\N$. Then by Lemma \ref{essential}, $p_t$ is an essential period. Notice that $p_{t+1}=2p_tb_{t+1}$. So $p_t\divides p_{t+1}$. By arguments using in the proof of Lemma \ref{T}, equality \eqref{period} holds.
\end{proof}
\begin{lem}
The Toeplitz sequence $\eta$ is not periodic.
\end{lem}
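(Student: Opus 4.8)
The plan is to invoke Lemma \ref{periodic}, which reduces non-periodicity of $\eta$ to the assertion that $\mathscr{B}$ is an infinite primitive set. First I would record that $\mathscr{B}=\{2^ib_i\}_{i\in\N}$ is infinite: the assignment $i\mapsto 2^ib_i$ is injective, since $2^ib_i=2^jb_j$ with $i<j$ would force $b_i=2^{j-i}b_j$, equating an odd number with an even one, which is impossible.

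Next I would verify primitivity. Suppose $2^ib_i\divides 2^jb_j$ for some $i,j\in\N$. Because $b_i$ is odd, $\gcd(b_i,2^j)=1$, and hence $b_i\divides b_j$; if $i\neq j$, the pairwise coprimality $\gcd(b_i,b_j)=1$ then gives $b_i\divides 1$, contradicting $b_i>1$. Thus $i=j$ and the two elements coincide, so $\mathscr{B}$ is primitive. With $\mathscr{B}$ primitive and infinite, Lemma \ref{periodic} immediately yields that $\eta$ is not periodic. I do not expect any genuine obstacle; the only point demanding care is the elementary divisibility bookkeeping for primitivity, where both the oddness of the $b_i$ and their pairwise coprimality must be used.

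Alternatively, one can argue directly from the structure already built, avoiding the citation. Lemma \ref{essential} produces infinitely many distinct essential periods $p_t=2^tb_1\cdots b_t$, which strictly increase since $p_{t+1}=2b_{t+1}p_t$. Were $\eta$ periodic with minimal period $q$, a short computation shows that $\Per_s(\eta)=\Per_{\gcd(s,q)}(\eta)$ for every $s\in\N$, because the residues $ks\pmod q$ sweep out exactly the multiples of $\gcd(s,q)$. Consequently no $s>q$ can be essential, since then $s'=\gcd(s,q)<s$ satisfies $\Per_{s'}(\eta)=\Per_s(\eta)$; this bounds all essential periods by $q$ and contradicts the unboundedness of the $p_t$. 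This self-contained route is slightly longer, as it requires establishing the identity for $\Per_s$ in the periodic case.
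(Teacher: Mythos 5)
Your primary argument is correct and is essentially the paper's own proof: the paper likewise observes that pairwise coprimality and oddness of the $b_i$ make $\mathscr{B}$ primitive and infinite, and then invokes Lemma \ref{periodic}; you merely spell out the divisibility bookkeeping that the paper leaves implicit. Your alternative route via unboundedness of the essential periods $p_t$ from Lemma \ref{essential} and the identity $\Per_s(\eta)=\Per_{\gcd(s,q)}(\eta)$ is also sound and self-contained, but it is not needed given the cited lemma.
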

\begin{proof}
Because $\{b_i\}_{i\in\N}$ is a set of pairwise coprime odd numbers, we obtain $\mathscr{B}$ is primitive and infinite. By Lemma \ref{periodic}, $\eta$ is not periodic.
\end{proof}
Now we will give the characterization of places where $A_t$ has holes and compute the number of holes.
\begin{lem}\label{holes}
Any $0\leq s<p_t$ is a hole in $A_t$ if and only if $s$ satisfies the following conditions
\begin{equation}\label{hole1}
2^t\divides s\text{ and }b_i\ndivides s\text{ for any }1\leq i\leq t.
\end{equation}
The number of holes in $A_t$ equals 
\begin{equation}\label{number}
\prod\limits_{i=1}^t\left(b_i-1\right)=:s_t.
\end{equation}
\end{lem}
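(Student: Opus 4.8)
The plan is to determine, for each residue $0\le s<p_t$, the value of $\eta(s+kp_t)$ as $k$ ranges over $\Z$; the position $s$ is a hole in $A_t$ precisely when this value is non-constant in $k$, i.e.\ when $s\notin\Per_{p_t}(\eta)$. The first observation I would record is that, since $2^jb_j\divides p_t$ for every $j\le t$ (as $2^j\divides 2^t$, $b_j\divides b_1\cdots b_t$ and $\gcd(2^j,b_j)=1$), we have $2^jb_j\divides s+kp_t$ if and only if $2^jb_j\divides s$, independently of $k$. Hence the ``small'' multiples ($j\le t$) never create a discrepancy between different shifts, and the whole question is governed by the $2$-adic valuation together with the ``large'' multiples $j>t$. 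Writing $p_t=2^tq$ with $q=b_1\cdots b_t$ odd, the decisive fact is that the $2$-adic valuation of $p_t$ equals $t$, so the behaviour splits according to whether $2^t\divides s$.

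First I would prove the ``if'' direction. Assume $2^t\divides s$ and $b_i\ndivides s$ for all $1\le i\le t$, and write $s=2^ts'$ with $0\le s'<q$. Choosing $k_1$ of parity opposite to $s'$ makes $s'+k_1q$ odd, so $s+k_1p_t=2^t(s'+k_1q)$ has $2$-adic valuation exactly $t$; then $2^{t+1}\ndivides s+k_1p_t$ rules out every $j>t$, while the small multiples are avoided because $b_j\ndivides s$ forces $2^jb_j\ndivides s$ for $j\le t$. Thus $s+k_1p_t$ is $\mathscr{B}$-free and $\eta(s+k_1p_t)=1$. On the other hand, since $\gcd(q,2b_{t+1})=1$, Lemma \ref{kogruencja} yields $k_2$ with $2b_{t+1}\divides s'+k_2q$, whence $2^{t+1}b_{t+1}\divides s+k_2p_t$ and $\eta(s+k_2p_t)=0$. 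As both values occur, $s$ is a hole. I expect this direction to be the main obstacle: exhibiting a shift with value $1$ means avoiding the infinitely many large multiples $2^jb_j$, $j>t$, simultaneously, and the device that makes this succeed is forcing the $2$-adic valuation of $s+k_1p_t$ to be exactly $t$.

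For the converse I would argue by contraposition, showing $s$ is filled whenever the stated conditions fail. If $2^t\ndivides s$, say the $2$-adic valuation of $s$ is $a<t$, then the $2$-adic valuation of $s+kp_t$ equals $a$ for every $k$ (adding $kp_t$, of $2$-adic valuation at least $t>a$, does not change it), so any $2^jb_j$ dividing a shift has $j\le a\le t$; by the opening observation this occurs for all $k$ or for none, and $\eta(s+kp_t)$ is constant. If instead $2^t\divides s$ but $b_i\divides s$ for some $i\le t$, then $2^i\divides s$ as well, so $2^ib_i\divides s$ and hence $2^ib_i\divides s+kp_t$ for every $k$, giving the constant value $0$. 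In both cases $s$ is filled, so the characterisation \eqref{hole1} follows. Finally, the holes are exactly the $s=2^tr$ with $0\le r<b_1\cdots b_t$ and $b_i\ndivides r$ for all $i$ (using that each $b_i$ is odd, so $b_i\divides 2^tr$ iff $b_i\divides r$); by the Chinese Remainder Theorem, applicable since the $b_i$ are pairwise coprime, the number of admissible $r$ is $\prod_{i=1}^t(b_i-1)$, which is $s_t$ and proves \eqref{number}.
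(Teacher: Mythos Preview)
Your proof is correct and follows essentially the same route as the paper: both reduce the question to the $2$-adic valuation of $s+kp_t$ after noting that $2^jb_j\divides p_t$ for $j\le t$, and both invoke Lemma~\ref{kogruencja} to produce a shift landing in $2^{t+1}b_{t+1}\Z$. Your presentation is slightly more streamlined---the single parity choice for $k_1$ replaces the paper's case split on $\eta(s)\in\{0,1\}$ and on $a=t$ versus $a>t$ (the sub-case $\eta(s)=0$, $a=t$ is in fact vacuous), and the Chinese Remainder Theorem gives the count directly where the paper writes out inclusion--exclusion---but the underlying argument is the same.
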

\begin{proof}
Let $0\leq s< p_t$. Let $m$ be odd and $a$ be a non-negative integer such that $s=m2^a$. Three cases appear:
\begin{enumerate}[(i)]
\item $2^jb_j\divides s$ for some $1\leq j\leq t$,\label{second}
\item $2^t\ndivides s$ and $2^ib_i\ndivides s$ for any $1\leq i\leq t$,\label{first}
\item $2^t\divides s\text{ and }2^ib_i\ndivides s\text{ for any }1\leq i\leq t$.\label{hole}
\end{enumerate}
Assume that condition \eqref{second} holds. Then $2^jb_j\divides s+p_t\ell=2^jb_j(\frac{s}{2^jb_j}+2^{t-j}\frac{b_1b_2\ldots b_t}{b_j}\ell)$ for any $\ell\in\Z$. So $s\in \Per_{p_t}(\eta)$.

Assume that condition \eqref{first} holds. 
Then \begin{equation}\label{ndiv}
2^ib_i\ndivides s+p_t \ell\text{ for any }i\in\N\text{ and }\ell\in\Z.
\end{equation} Indeed, suppose that $2^ib_i\divides s+p_t \ell$ for some $\ell\in\Z$ and some $i\in\N$. Notice $2^{a+1}\ndivides s+p_t \ell=2^a(m+2^{t-a}b_1b_2\ldots b_t\ell)$. So $i\leq a<t$ which implies $2^ib_i\divides p_t$. Hence $2^ib_i\divides s$, which contradicts \eqref{first}. So \eqref{ndiv} holds. This implies $s\in \Per_{p_t}(\eta)$.

Assume that condition \eqref{hole} holds. Notice that \eqref{hole} is equivalent to $2^t\divides s$ and $b_i\ndivides s$ for any$1\leq i\leq t$. Indeed, if \eqref{hole} holds then $2^i\divides s$ for any $1\leq i\leq t$. We claim that $s\notin \Per_{p_t}(\eta)$. Indeed, we have two possibilities
\begin{itemize}
\item $\eta(s)=1$. Then notice that $2^t=\gcd(2^{t+1}b_{t+1},p_t)\divides s$. So by Lemma \ref{kogruencja},
there exists $n\in\Z$ such that $p_tn+s\equiv 0 \ (\bmod \ 2^{t+1}b_{t+1})$. Hence $\eta(s+p_tn)=0$. 
\item $\eta(s)=0$. If $a=t$ then $2^{t+1}\ndivides s+2p_t=2^t(m+2b_1b_2\ldots b_t)$. Suppose that 
\begin{equation}\label{divide}
2^jb_j\divides s+2p_t\text{ for some }j\in\N.
\end{equation}
Then $j\leq t$. Because $b_j$ is odd, we have $b_j\divides m+2b_1b_2\ldots b_t$, which implies $b_j\divides m$. But this contradicts \eqref{hole}. So \eqref{divide} does not hold. If $a>t$ then because $b_1,b_2,\ldots,b_t$ are odd, we obtain $2^{t+1}\ndivides s+p_t=2^t(2^{a-t}m+b_1b_2\ldots b_t)$. Suppose that 
\begin{equation}\label{divide2}
2^jb_j\divides s+p_t\text{ for some }j\in\N.
\end{equation} 
Then $j\leq t$. Because $2^jb_j\divides p_t$, we have $2^jb_j\divides s$. But this contradicts \eqref{hole}. So \eqref{divide2} does not hold. Hence we obtain $\eta(s+p_t)=1$.
\end{itemize}
There are $b_1b_2\ldots b_t$ non-negative multiples of $2^t$ smaller than $p_t$. We need to know how many of them are not multiples of any $b_i$ for $1\leq i\leq t$. We will compute how many of non-negative multiples of $2^t$ smaller than $p_t$ are multiples of $b_i$ for some $1\leq i\leq t$. This is equivalent to compute the power of the union $\bigcup_{i=1}^t B_i$, where $B_i=\{0\leq w<b_1b_2\ldots b_t\ : \ b_i\divides w\}$ for any $1\leq i\leq t$. Notice that because $\{b_i\}_{i\in\N}$ are pairwise coprime, for any $\emptyset\neq J\subset\{1,2,\ldots,t\}$ we have $\bigcap_{j\in J}B_j=\bigcap_{j\in J} b_j\Z\cap [0,b_1b_2\ldots b_t)=(\prod_{j\in J} b_j)\Z\cap [0,b_1b_2\ldots b_t)$. By Lemma \ref{intersect}, we obtain 
\begin{equation}\label{bi}
|\bigcap_{j\in J}B_j|=\frac{b_1b_2\ldots b_t}{\prod_{j\in J} b_j}.
\end{equation}
By the Inclusion-Exclusion Principle and \eqref{bi}, we have that 
\begin{align*}
&|\bigcup_{i=1}^t B_i|=\sum_{\emptyset\neq J\subset\{1,2,\ldots,t\}}(-1)^{|J|-1}|\bigcap_{j\in J}B_j|=\sum_{\emptyset\neq J\subset\{1,2,\ldots,t\}}(-1)^{|J|-1}\frac{b_1b_2\ldots b_t}{\prod_{j\in J} b_j}=\\
&b_1b_2\ldots b_t\sum_{\emptyset\neq J\subset\{1,2,\ldots,t\}}(-1)^{|J|-1}\frac{1}{\prod_{j\in J} b_j}.
\end{align*}
Hence the number of $0\leq s<p_t$ satisfying \eqref{hole} equals
\begin{align*}
&b_1b_2\ldots b_t\left(1-\sum_{\emptyset\neq J\subset\{1,2,\ldots,t\}}(-1)^{|J|-1}\frac{1}{\prod_{j\in J} b_j}\right)=b_1b_2\ldots b_t\prod\limits_{i=1}^t\left(1-\frac{1}{b_i}\right)\\
&=\prod\limits_{i=1}^t\left(b_i-1\right).
\end{align*}
This completes the proof.
\end{proof}
\begin{cor}[see Remark 3.2 in \cite{AB-SK-JKP-ML}]
The Toeplitz sequence $\eta$ is regular. 
\end{cor}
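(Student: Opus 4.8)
The plan is to exhibit the periodic structure $(p_t)_{t\geq1}$ furnished by Lemma \ref{structure} as the witness for regularity, and to verify the required density condition by a direct computation that feeds on the hole count of Lemma \ref{holes}. By definition of $A_t$, the filled places of $A_t$ are exactly the elements of $\Per_{p_t}(\eta)\cap[0,p_t)$, while the holes are those positions of $[0,p_t)$ lying outside $\Per_{p_t}(\eta)$. Since Lemma \ref{holes} gives the number of holes as $s_t=\prod_{i=1}^t(b_i-1)$, the number of filled places is $r_t=p_t-s_t$, and therefore
$$\frac{|\Per_{p_t}(\eta)\cap[0,p_t)|}{p_t}=\frac{p_t-s_t}{p_t}=1-\frac{s_t}{p_t}.$$
Thus the whole matter reduces to showing that $s_t/p_t\to0$ as $t\to\infty$.

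To this end I would compute the ratio explicitly. Since $p_t=2^tb_1b_2\cdots b_t$, we obtain
$$\frac{s_t}{p_t}=\frac{\prod_{i=1}^t(b_i-1)}{2^t\,b_1b_2\cdots b_t}=\frac{1}{2^t}\prod_{i=1}^t\Bigl(1-\frac1{b_i}\Bigr).$$
Because each $b_i>1$, every factor $1-\tfrac1{b_i}$ lies in $(0,1)$, so the product is at most $1$ and hence $0\le s_t/p_t\le 2^{-t}$. Letting $t\to\infty$ forces $s_t/p_t\to0$, so the displayed density tends to $1$. This is precisely the regularity condition for the periodic structure $(p_t)_{t\geq1}$ (equivalently, condition \eqref{cond2}), and the corollary follows.

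I do not anticipate any genuine obstacle, since the combinatorial heart of the matter is already settled by Lemma \ref{holes}. The only point worth underscoring is the robustness provided by the factor $2^t$ in the period: regardless of whether $\prod_{i=1}^t(1-1/b_i)$ decays to $0$ (when $\sum 1/b_i=\infty$) or converges to a positive limit (when $\sum 1/b_i<\infty$), it is bounded above by $1$, so the geometric factor $2^{-t}$ alone already drives the hole density to zero. It is exactly this $2^t$, built into $p_t$ by the shape $\mathscr{B}=\{2^ib_i\}_{i\in\N}$, that makes regularity automatic here.
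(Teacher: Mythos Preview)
Your proposal is correct and follows essentially the same approach as the paper: use Lemma~\ref{holes} to identify the hole count $s_t=\prod_{i=1}^t(b_i-1)$, compute $s_t/p_t$, bound $\prod_{i=1}^t(1-1/b_i)\le1$ so that $s_t/p_t\le2^{-t}\to0$, and conclude regularity. The paper's proof is just a more compressed version of exactly this calculation.
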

\begin{proof}
By Lemma \ref{holes}, we have $$\lim_{t\to\infty}\frac{|(\Z\setminus\Per_{p_t}(\eta))\cap[0,p_t)|}{p_t}=\lim_{t\to\infty}\frac{\prod\limits_{i=1}^t\left(b_i-1\right)}{2^tb_1b_2\ldots b_t}\leq\lim_{t\to\infty}\frac{1}{2^t}=0.$$ The assertion follows.
\end{proof}
\begin{cor}\label{consp} 
We have $k_t=2^t$(see \eqref{min}).
\end{cor}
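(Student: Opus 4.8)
The plan is to read off the geometry of the holes directly from Lemma \ref{holes} and then bound the minimal gap $k_t$ from both sides. By that lemma, a position $0\le s<p_t$ is a hole in $A_t$ precisely when $2^t\mid s$ and $b_i\nmid s$ for every $1\le i\le t$. The single structural fact I would extract is that \emph{every hole is a multiple of $2^t$}; this is what forces each gap counted in $k_t$ to be a multiple of $2^t$.

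For the lower bound, I would argue that each quantity entering the minimum in \eqref{min} is a positive multiple of $2^t$. Indeed, consecutive holes $I_j^{(t)}<I_{j+1}^{(t)}$ are both multiples of $2^t$, so their difference is a positive multiple of $2^t$ and hence at least $2^t$; and the wrap-around term $p_t-I_{s_t}^{(t)}+I_1^{(t)}$ is a multiple of $2^t$ as well, since $p_t=2^tb_1\cdots b_t$, $I_{s_t}^{(t)}$ and $I_1^{(t)}$ all are, and it is strictly positive because $I_{s_t}^{(t)}<p_t$. Here I would first note that there are at least two holes, since $s_t=\prod_{i=1}^t(b_i-1)\ge 2^t\ge 2$, so the minimum is taken over a nonempty set. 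This gives $k_t\ge 2^t$.

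For the matching upper bound, I would exhibit two consecutive holes whose difference is exactly $2^t$. The natural candidates are $s=2^t$ and $s=2^{t+1}$: both are multiples of $2^t$, and since each $b_i$ is odd and greater than $1$ (hence $\ge 3$) it divides neither $2^t$ nor $2^{t+1}$, so both positions satisfy \eqref{hole1} and are holes (they lie in $[0,p_t)$ because $p_t=2^tb_1\cdots b_t\ge 2^t3^t>2^{t+1}$). As no multiple of $2^t$ lies strictly between $2^t$ and $2^{t+1}$, these are consecutive holes, and their difference is $2^t$; in fact $0$ is not a hole (every $b_i$ divides $0$), so $I_1^{(t)}=2^t$ and $I_2^{(t)}=2^{t+1}$. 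Hence $k_t\le 2^t$, and combining with the lower bound yields $k_t=2^t$.

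The computation is entirely routine once the characterization of holes is in hand, so I do not anticipate a genuine obstacle. The only point that requires a little care is the cyclic, wrap-around term $p_t-I_{s_t}^{(t)}+I_1^{(t)}$: one must treat the holes as positions on the cycle $\Z/p_t\Z$ and check that this term, too, is a positive multiple of $2^t$ rather than accidentally smaller. The divisibility $2^t\mid p_t$ is exactly what guarantees this.
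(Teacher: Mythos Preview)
Your argument is correct and follows essentially the same route as the paper's proof: both use the characterization of holes from Lemma~\ref{holes} to see that all gaps in \eqref{min} are positive multiples of $2^t$ (giving $k_t\ge 2^t$), and both identify $I_1^{(t)}=2^t$ and $I_2^{(t)}=2^{t+1}$ to obtain $k_t\le 2^t$. You are simply more explicit about the wrap-around term and about why there are at least two holes, points the paper leaves implicit.
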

\begin{proof}
By conditions \eqref{hole1} and $2^t\divides p_t$, we have $2^t\divides k_t$. Notice that for any $t\geq1$ the pair of the smallest numbers satisfying \eqref{hole1} is $2^t$ and $2^{t+1}$. So $I_1^{(t)}=2^t$ and $I_2^{(t)}=2^{t+1}$. Hence $k_t=2^t$.
\end{proof}
\begin{lem}\label{Sh}
The point $\eta$ has property $(Sh)$.
\end{lem}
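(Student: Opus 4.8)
The plan is to derive Lemma~\ref{Sh} directly from Corollary~\ref{consp}. By definition, $\eta$ has property $(Sh)$ precisely when the quantity $k_t$ introduced in \eqref{min}---the minimal gap, taken cyclically, between consecutive positions of holes in $A_t$---tends to infinity as $t\to\infty$. Corollary~\ref{consp} has already identified this quantity exactly, establishing that $k_t=2^t$ for every $t\geq1$. Since the $k_t$ appearing in Corollary~\ref{consp} is, by the parenthetical reference ``see \eqref{min}'', the very same object that enters the definition of property $(Sh)$, no reconciliation of notation is required.

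Thus the only remaining step is to observe that $\lim_{t\to\infty}2^t=\infty$, whence $k_t\to\infty$ and property $(Sh)$ holds for $\eta$. The argument is therefore a single sentence invoking Corollary~\ref{consp} together with the elementary fact that the powers of $2$ diverge.

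I expect no genuine obstacle here. The substantive content was carried out earlier: Lemma~\ref{holes} characterizes the hole positions through condition \eqref{hole1} (each hole is a multiple of $2^t$ not divisible by any $b_i$), and Corollary~\ref{consp} exploits this to show both that $2^t\divides k_t$ and that the two closest holes sit at $2^t$ and $2^{t+1}$, pinning down $k_t=2^t$ exactly. The present lemma merely records the limiting behaviour of the gap $k_t$ already computed there, so its proof is immediate.
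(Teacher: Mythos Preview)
Your proposal is correct and essentially matches the paper's approach: both rely on the fact, established via condition \eqref{hole1}, that the holes in $A_t$ lie among the multiples of $2^t$, so the minimal cyclic gap $k_t$ is at least $2^t\to\infty$. The only cosmetic difference is that you invoke Corollary~\ref{consp} (which gives $k_t=2^t$ exactly), whereas the paper re-derives the lower bound $k_t\ge 2^t$ directly from \eqref{hole1} and the divisibility $2^t\mid p_t$.
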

\begin{proof}
Let $t\geq1$. As we proved above the block $A_t$ has holes at some non-zero multiples of $2^t$ (see \eqref{hole1}). Moreover, $2^t\divides p_t$. Hence the distance between consecutive holes in $A_t$ is at least $2^t$. So $\eta$ has separated holes. 
\end{proof}
\begin{cor}\label{coal}
The system $(X_\eta,S)$ is coalescent, i.e. each continuos map $U\colon X_\eta\to X_\eta$ commuting with $S$ is a homeomorphism.
\end{cor}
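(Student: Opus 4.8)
The plan is to reduce the statement to the general coalescence theorem for Toeplitz subshifts satisfying property $(Sh)$, which was already recorded in the Remark following that definition. First I would observe that by construction $X_\eta=\ov{\mathcal{O}_S(\eta)}$ is the Toeplitz subshift generated by the Toeplitz sequence $\eta$ (Lemma \ref{structure}), with periodic structure $(p_t)_{t\geq1}$. The only hypothesis needed to invoke coalescence is property $(Sh)$, so everything hinges on having that property available.

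Next I would appeal to Lemma \ref{Sh}, which establishes precisely that $\eta$ has separated holes: by the characterization of hole positions in Lemma \ref{holes} (refined in Corollary \ref{consp}, which gives $k_t=2^t$), the gap between consecutive holes in $A_t$ is at least $2^t$, so $k_t\to\infty$ as $t\to\infty$. With property $(Sh)$ in hand, the Remark recalled earlier (invoking Proposition 3 in \cite{Bu-Kw}) asserts that every continuous map $U\colon X_\eta\to X_\eta$ commuting with $S$ is automatically a homeomorphism. That is exactly the definition of coalescence, so the corollary follows at once.

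I do not expect any genuine obstacle at this stage, since all the difficulty has already been absorbed into verifying $(Sh)$. Were I forced to reprove coalescence from scratch rather than cite it, the main point would be that property $(Sh)$ forces $\pi^{-1}(\{g\})$ to be a singleton on the full Haar-measure set $G_0$ and to contain at most two elements elsewhere; a continuous $S$-commuting $U$ induces on the maximal equicontinuous factor $(G,T)$ the translation by $g_0=U'\overline{0}$ (as recorded around \eqref{U}), and the sparse, widely separated holes then prevent $U$ from collapsing distinct points, so that $U$ is injective and hence, by compactness of $X_\eta$, a homeomorphism. Given the earlier Remark, however, this reasoning is unnecessary and the proof is immediate.
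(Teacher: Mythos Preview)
Your proof is correct and follows essentially the same approach as the paper: invoke Lemma~\ref{Sh} to get property $(Sh)$ for $\eta$, then apply Proposition~3 of \cite{Bu-Kw} (recorded in the earlier Remark) to conclude coalescence. The additional sketch you give of how one might argue from scratch is not needed, but is accurate.
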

\begin{proof}
By Proposition 3 in \cite{Bu-Kw}, any Toeplitz system $\ov{\mathcal{O}_S(x)}$ such that $x$ has property $(Sh)$, is coalescent. Hence by Lemma \ref{Sh}, the system $(X_\eta,S)$ is coalescent.
\end{proof}
In the following lemmas, we will give specific properties of holes in $A_t$.

For any $t\in\N$ let $I_1^{(t)}< I_2^{(t)}<\ldots< I_{s_t}^{(t)}$ be all positions of the holes in the block $A_t$ and for any $h\in G$ let $J_1^{(t)}(h)<J_2^{(t)}(h)<\ldots<J_{s_t}^{(t)}(h)$ be all positions of the holes in the block $A_t(h)$.
\begin{lem}\label{podz}
Let $t\geq1$. Then for any $1\leq i\leq t$ we have $$\left\{\frac{I_1^{(t)}}{2^t},\frac{I_2^{(t)}}{2^t},\ldots,\frac{I_{s_t}^{(t)}}{2^t}\right\} \bmod b_i=\{1,2,\ldots, b_i-1\}.
$$
\end{lem}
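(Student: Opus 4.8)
The plan is to reduce the statement, via Lemma \ref{holes}, to a clean divisibility description of the hole positions and then settle it with the Chinese Remainder Theorem. By Lemma \ref{holes}, a number $0\le s<p_t$ is a hole in $A_t$ if and only if $2^t\divides s$ and $b_i\ndivides s$ for every $1\le i\le t$. So I first write each hole as $s=2^t w$. Since $s<p_t=2^tb_1b_2\ldots b_t$, the quotient satisfies $0\le w<b_1b_2\ldots b_t$, and because each $b_i$ is odd (hence coprime to $2^t$) the condition $b_i\ndivides s$ is equivalent to $b_i\ndivides w$. Thus
\[
\left\{\frac{I_1^{(t)}}{2^t},\ldots,\frac{I_{s_t}^{(t)}}{2^t}\right\}=W:=\{\,0\le w<b_1b_2\ldots b_t \ : \ b_i\ndivides w\text{ for all }1\le i\le t\,\},
\]
and the claim becomes the set equality $W\bmod b_i=\{1,2,\ldots,b_i-1\}$.

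The inclusion $W\bmod b_i\subseteq\{1,\ldots,b_i-1\}$ is immediate: any $w\in W$ satisfies $b_i\ndivides w$ by definition, so $w\bmod b_i\neq0$ and therefore $w\bmod b_i\in\{1,\ldots,b_i-1\}$.

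For the reverse inclusion, which is the real content, I fix $r\in\{1,\ldots,b_i-1\}$ and construct a witness $w\in W$ with $w\equiv r\ (\bmod\ b_i)$. Since $b_1,\ldots,b_t$ are pairwise coprime, the Chinese Remainder Theorem yields a unique $w$ with $0\le w<b_1b_2\ldots b_t$ solving the system $w\equiv r\ (\bmod\ b_i)$ together with $w\equiv 1\ (\bmod\ b_j)$ for every $j\neq i$. Then $b_i\ndivides w$ because $r\neq0$, and $b_j\ndivides w$ for $j\neq i$ because $1\not\equiv0\ (\bmod\ b_j)$ (here $b_j>1$ is used). Hence $w\in W$ and $w\bmod b_i=r$, which proves $\{1,\ldots,b_i-1\}\subseteq W\bmod b_i$ and completes the argument.

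The step I expect to require the most care is the surjectivity direction: one must guarantee that the number produced by the congruence conditions is an \emph{actual} hole, i.e.\ that it escapes divisibility by \emph{all} of $b_1,\ldots,b_t$ simultaneously, not merely by $b_i$. This is exactly why I prescribe the residue $1$ (rather than $0$) modulo each $b_j$ with $j\neq i$; pairwise coprimality of the $b_j$ is what makes such a simultaneous prescription solvable.
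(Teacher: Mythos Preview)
Your proof is correct, and it takes a genuinely different route from the paper's.

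The paper fixes a residue $j\in\{1,\ldots,b_i-1\}$ and argues non-constructively by counting: it sets $A=(b_i\Z+j)\cap[0,b_1\cdots b_t)\cap\bigcup_{m=1}^t b_m\Z$, computes $|A|$ via inclusion--exclusion (using Lemmas \ref{inter} and \ref{intersect}), and observes that $|A|<\frac{b_1\cdots b_t}{b_i}=|(b_i\Z+j)\cap[0,b_1\cdots b_t)|$; hence some element of the progression $(b_i\Z+j)$ in the given range escapes all the $b_n$'s. You instead construct an explicit witness with the Chinese Remainder Theorem by prescribing the residues $r$ mod $b_i$ and $1$ mod each $b_j$, $j\neq i$. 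Your argument is shorter and avoids the inclusion--exclusion computation entirely, at the cost of invoking CRT (which the paper does not state but is of course standard once pairwise coprimality of the $b_j$ is available). Both approaches exploit the same two ingredients---pairwise coprimality and $b_j>1$---but the paper uses them to bound a count, while you use them to build a solution directly.
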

\begin{proof}
Let $1\leq i\leq t$ and $1\leq j\leq b_i-1$. Put $$A:=(b_i\Z+j)\cap[0,b_1b_2\ldots b_t)\cap\bigcup\limits_{m=1}^tb_m\Z$$
 and $$A_F:=(b_i\Z+j)\cap\bigcap_{n\in F}b_n\Z\text{ for any }F\subset\{1,2,\ldots,t\}\setminus\{i\}.$$ Because $b_1,b_2,\ldots,b_t$ are pairwise coprime, we have $$\bigcap_{n\in F}b_n\Z=\left(\prod_{n\in F}b_n\right)\Z.$$  By Lemma \ref{inter}, for any $F\subset\{1,2,\ldots,t\}\setminus\{i\}$, we obtain
\begin{equation}
A_F=\left(\prod_{n\in F\cup\{i\}}b_n\right)\Z+\left(\prod_{n\in F}b_n\right)s,
\end{equation}
where $s\in\Z$ satisfies $\left(\prod\limits_{n\in F}b_n\right)s\equiv j \ (\bmod \ b_i)$.
By Lemma \ref{intersect}, the intersection $A_F\cap[0,b_1b_2\ldots b_t)$ has $\frac{b_1b_2\ldots b_t}{\prod\limits_{n\in F\cup\{i\}}b_n}$ elements. By the Inclusion-Exclusion Principle (using the same arguments as in the proof of identity \eqref{number}), we obtain that the number of elements of $A$ equals
\begin{align*}
&\sum_{1\leq n\neq i\leq t}\frac{b_1b_2\ldots b_t}{b_ib_n}-\sum_{\substack{1\leq n\neq i\neq n'\leq t\\
n\neq n'}}\frac{b_1b_2\ldots b_t}{b_ib_nb_{n'}}+\ldots+(-1)^t\frac{b_1b_2\ldots b_t}{\prod\limits_{n=1}^tb_n}\\
&=\frac{b_1b_2\ldots b_t}{b_i}\left(1-\prod_{1\leq n\neq i\leq t}\left(1-\frac{1}{b_n}\right)\right).
\end{align*}
Hence the number of elements of $A$ is smaller than the number of elements of the set $(b_i\Z+j)\cap[0,b_1b_2\ldots b_t)$, which, by Lemma \ref{intersect}, equals $\frac{b_1b_2\ldots b_t}{b_i}$. Hence there exists $m\in(b_i\Z+j)\cap[0,b_1b_2\ldots b_t)$ such that $b_n\ndivides m$ for any $1\leq n\leq t$. So, by \eqref{hole1}, we have $m\in\left\{\frac{I_1^{(t)}}{2^t},\frac{I_2^{(t)}}{2^t},\ldots,\frac{I_{s_t}^{(t)}}{2^t}\right\}$.
The assertion follows.
\end{proof}
\begin{lem}\label{space}
Let $h=(n_t)_{t=1}^\infty\in G$ can be lifted to $U\in C(S)$. Then there exists $t_0\geq1$ such that for any $t\geq t_0$
$$I_1^{(t)}-J_1^{(t)}(h)=I_2^{(t)}-J_2^{(t)}(h)=\ldots=I_{s_t}^{(t)}-J_{s_t}^{(t)}(h)=:k',$$
where $k'$ depends on $t_0$.
\end{lem}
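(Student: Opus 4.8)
The plan is to combine two rigidities: that $U$ is a sliding block code of a \emph{fixed} radius, while the holes of $\eta$ sit on the lattice $2^t\Z$ whose mesh tends to infinity. Since automorphisms are codings, first I would fix a radius $\rho\ge1$ such that $(Uy)(n)$ depends only on $y[n-\rho,n+\rho]$ for every $y\in X_\eta$. Because $h$ lifts to $U$, evaluating \eqref{U} at $n=0$ gives $\pi(U\eta)=U'\overline{0}=h$, so $U\eta$ has the same $p_t$-skeleton as $S^{n_t}\eta$; hence the holes of that skeleton are exactly $J_1^{(t)}(h)<\dots<J_{s_t}^{(t)}(h)$, that is the set $H_t-n_t\pmod{p_t}$, where $H_t:=\{I_1^{(t)},\dots,I_{s_t}^{(t)}\}$.

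The key local observation is that if $n$ is a hole of the $p_t$-skeleton of $U\eta$ then $[n-\rho,n+\rho]\not\subset\Per_{p_t}(\eta)$: otherwise $\eta$ would agree on this whole window with all its $p_t$-translates, forcing $(U\eta)(n+kp_t)=(U\eta)(n)$ for all $k$ and hence $n\in\Per_{p_t}(U\eta)$, a contradiction. So every hole of $U\eta$ lies within distance $\rho$ of a hole of $\eta$. By Corollary \ref{consp} and \eqref{min} the holes of $\eta$ are spaced at least $2^t$ apart, so once $t\ge t_0$ with $2^{t_0}>2\rho$ this neighbour is unique; the resulting assignment is injective and, both sets having $s_t$ elements, a bijection $\phi$ from the holes of $U\eta$ onto $H_t$ with $|\phi(J)-J|\le\rho$ for every $J$.

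I would then upgrade $\phi$ to a translation. For any two holes $J,J'$ of $U\eta$, both $\phi(J)-\phi(J')$ and $J-J'$ are differences of elements of $H_t$ (the additive shift by $n_t$ cancels), hence multiples of $2^t$; therefore $\phi(J)-J\equiv\phi(J')-J'\pmod{2^t}$. As both displacements lie in $[-\rho,\rho]$ and $2\rho<2^t$, they coincide with a single $d=d^{(t)}$, $|d|\le\rho$, and $\phi(J)=J+d$ throughout; equivalently $H_t-n_t=H_t-d\pmod{p_t}$. Because $|d|<2^t$ while every $I_j^{(t)}$ is a multiple of $2^t$ lying in $[2^t,p_t-2^t]$ by \eqref{hole1}, the numbers $I_j^{(t)}-d$ all stay in $(0,p_t)$ without reordering, so $J_j^{(t)}(h)=I_j^{(t)}-d$ and $I_j^{(t)}-J_j^{(t)}(h)=d$ for every $j$, which is the asserted common value at this fixed $t$.

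It remains to see that $d$ does not depend on $t$. The relation $H_t-n_t=H_t-d$ makes $H_t$ invariant under the translation by $n_t-d$ in $\Z/p_t\Z$; but $H_t$ has no nontrivial translation symmetry. Indeed, any translation fixing $H_t$ must be by a multiple of $2^t$, and then, writing $H_t/2^t$ under $\Z/(b_1\cdots b_t)\Z\cong\prod_i\Z/b_i\Z$ as $\prod_i(\Z/b_i\Z\setminus\{0\})$ via \eqref{hole1}, it must act trivially in each coordinate, hence trivially. Thus $n_t\equiv d\pmod{p_t}$ with $|d|\le\rho$; the compatibility $n_{t+1}\equiv n_t\pmod{p_t}$ of the coordinates of $h$ gives $d^{(t+1)}\equiv d^{(t)}\pmod{p_t}$, and both having modulus $\le\rho<p_t/2$ forces $d^{(t+1)}=d^{(t)}$. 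Setting $k':=d^{(t)}$ for $t\ge t_0$ finishes the argument. The main obstacle is the passage from ``each hole moves by at most $\rho$'' to ``all holes move by the same $d$'': this is exactly where the coarse $2^t$-granularity of the holes is indispensable, since a fixed code radius cannot resolve displacements finer than the eventually enormous mesh $2^t$; the symmetry-freeness of $H_t$ is the second, arithmetic, ingredient, used only to identify the common displacement with $n_t$ and thereby make $k'$ stable in $t$.
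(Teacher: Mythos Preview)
Your argument is correct. For the part carried out at a fixed level $t$—matching each $J_j^{(t)}(h)$ to a unique $I_i^{(t)}$ within the code radius and then forcing all displacements to coincide via the $2^t$-granularity—you follow essentially the same line as the paper, with only cosmetic differences (a two-sided window of radius $\rho$ versus the paper's one-sided window of length $k$, and your direct congruence $\phi(J)-J\equiv\phi(J')-J'\pmod{2^t}$ in place of the paper's pigeonhole picture that first pins down the order-preserving bijection).

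Where you genuinely diverge is in the stability of $d$ across $t$. The paper argues by nesting: every hole $I_i^{(t_0+1)}$ lies at some $I_{i_0}^{(t_0)}+jp_{t_0}$, and likewise for the $J$'s, so the displacement at level $t_0+1$ is literally one already computed at level $t_0$. You instead invoke an arithmetic rigidity: $H_t$ has no nontrivial translation in $\Z/p_t\Z$, proved via the CRT identification $H_t/2^t\cong\prod_i(\Z/b_i\Z\setminus\{0\})$, and this forces $n_t\equiv d\pmod{p_t}$. That conclusion is much stronger than the lemma requires—it says $h=\overline{d}\in\Z\overline{1}$, which is exactly Theorem~\ref{centralizer}. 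In the paper this rigidity is developed only afterwards, spread over Lemma~\ref{podz}, Lemma~\ref{shift}, Lemma~\ref{eq}, and the proof of Theorem~\ref{centralizer}; your route compresses all of that into the present lemma. The paper's nesting argument buys modularity (Lemma~\ref{space} stands on its own without the CRT input), while your route buys directness (once this lemma is in hand, the main theorem is immediate).
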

\begin{proof}
Let $h=(n_t)_{t=1}^\infty\in G$ can be lifted to $U\in C(S)$. By Curtis-Hedlund-Lyndon Theorem (see Theorem 3.4 in \cite{MR0259881}), there exist $I\in\Z$ and $k\in\N$ and a function $f\colon\{0,1\}^k\to\{0,1\}$ such that
\begin{equation}
U(y)(m)=f(y[m+I,m+I+k-1])
\end{equation}
for any $m\in\Z$ and any $y\in\ov{\mathcal{O}_S(\eta)}$. Without loss of generality we can assume that $I=0$. Indeed, notice that $U(S^Iy)(m)=f(y[m,m+k-1])$ for any $m\in\Z$ and any $y\in\ov{\mathcal{O}_S(\eta)}$. Moreover, $US^I\in C(S)$ if and only if $U\in C(S)$.

Let $t_0\geq1$ be large enough so the distance between consecutive holes in $A_{t_0}$ be greater than $k$, i.e. $2^{t_0}>k$ (see Corollary \ref{consp}).
 
Let $t\geq t_0$. By the definition, $\eta(h)$ has the same $p_t$-skeleton as $S^{n_t}\eta$. So $A_t(h)$ has $s_t$ holes and the space between consecutive holes is divisible by $2^t$. We claim that for any $0\leq j<s_t$ there exists unique $0\leq i<s_t$ such that 
\begin{equation}\label{ineq}
0\leq I_i^{(t)}-J_j^{(t)}(h)<k.
\end{equation}
Suppose not, then for any $\ell\in\Z$ we use $k$ places from $\eta$, which are $p_t$-periodic, to code $J_j^{(t)}(h)+p_t\ell$ by $f$ but this contradicts that $J_j^{(t)}(h)$ is a hole. We only need to show uniqueness of $i$. Suppose that $0\leq I_i^{(t)}-J_j^{(t)}(h)<k$ and $0\leq I_{i'}^{(t)}-J_j^{(t)}(h)<k$ for $1\leq i',i<s_t$ then $|I_i^{(t)}-I_{i'}^{(t)}|<k<2^t$. So $i=i'$. 

By \eqref{ineq}, there exists unique $1\leq i\leq s_t$ such that $I_i^{(t)}-J_1^{(t)}(h)< k$ (look at the picture).
\begin{center}
{
\begin{pspicture}(0,-1.92)(10.219078,1.92)
\psline[linecolor=black, linewidth=0.04](1.5333333,1.74)(1.5333333,-1.92)
\psline[linecolor=black, linewidth=0.04](1.15,0.91)(3.26,0.91)
\psline[linecolor=black, linewidth=0.04](1.13,-1.11)(4.08,-1.11)
\rput[bl](0.17,1.01){$\eta$}
\rput[bl](0.0,-0.96){$\eta(h)$}
\rput(1.8,1.81){$0$}
\rput(3.07,0.46){$I_1^{(t)}$}
\rput(4.63,0.45){$I_i^{(t)}$}
\rput(4.15,-1.61){$J_1^{(t)}(h)$}
\rput(7.54,-1.61){$J_{s_t}^{(t)}(h)$}
\psline[linecolor=black, linewidth=0.04](8.363334,1.77)(8.363334,-1.89)
\rput(8.76,1.79){$p_t$}
\rput(9.27,0.45){$I_1^{(t)}+p_t$}
\psframe[linecolor=black, linewidth=0.04, dimen=outer](3.03,1.03)(2.97,0.79)
\psframe[linecolor=black, linewidth=0.04, dimen=outer](4.52,1.0)(4.46,0.76)
\psframe[linecolor=black, linewidth=0.04, dimen=outer](4.14,-0.99)(4.08,-1.23)
\psframe[linecolor=black, linewidth=0.04, dimen=outer](9.35,1.01)(9.29,0.77)
\psframe[linecolor=black, linewidth=0.04, dimen=outer](7.54,-1.0)(7.48,-1.24)
\psline[linecolor=black, linewidth=0.04, linestyle=dotted, dotsep=0.10583334cm](4.113333,0.89)(4.113333,-1.06)
\psline[linecolor=black, linewidth=0.04, linestyle=dotted, dotsep=0.10583334cm](5.0533333,0.83)(5.0533333,-1.12)
\psline[linecolor=black, linewidth=0.04](4.13,-1.11)(5.11,-1.11)
\psline[linecolor=black, linewidth=0.04, linestyle=dotted, dotsep=0.10583334cm](5.08,-1.11)(7.4,-1.12)
\psline[linecolor=black, linewidth=0.04](7.55,-1.11)(9.67,-1.11)
\psline[linecolor=black, linewidth=0.04, linestyle=dotted, dotsep=0.10583334cm](9.74,-1.11)(10.3,-1.12)
\psline[linecolor=black, linewidth=0.04, linestyle=dotted, dotsep=0.10583334cm](9.76,0.91)(10.32,0.9)
\psline[linecolor=black, linewidth=0.04](4.1,0.89)(5.31,0.89)
\psline[linecolor=black, linewidth=0.04, linestyle=dotted, dotsep=0.10583334cm](5.4,0.88)(7.43,0.87)
\psline[linecolor=black, linewidth=0.04](7.34,0.89)(9.7,0.89)
\psline[linecolor=black, linewidth=0.04, linestyle=dotted, dotsep=0.10583334cm](3.36,0.9)(3.97,0.89)
\psline[linecolor=black, linewidth=0.04, linestyle=dotted, dotsep=0.10583334cm](7.5033336,0.88)(7.5033336,-1.07)
\psline[linecolor=black, linewidth=0.04, linestyle=dotted, dotsep=0.10583334cm](9.333333,0.85)(9.333333,-1.1)
\psline[linecolor=black, linewidth=0.04](4.12,-0.12)(5.06,-0.12)
\rput(4.56,-0.42){$<k$}
\psline[linecolor=black, linewidth=0.04](7.5,-0.12)(9.35,-0.12)
\rput(8.79,-0.4){$>k$}
\psline[linecolor=black, linewidth=0.04, linestyle=dotted, dotsep=0.10583334cm](0.43,0.91)(1.04,0.9)
\psline[linecolor=black, linewidth=0.04, linestyle=dotted, dotsep=0.10583334cm](0.43,-1.11)(1.04,-1.12)
\end{pspicture}
}
\end{center}

 If $i>1$ then because $(I_1^{(t)}+p_t)-J_{s_t}^{(t)}(h)$ is greater than $k$, there exist $1\leq j<j'\leq s_t$ and $1<i'\leq s_t$ such that $$I_{i'}^{(t)}-J_j^{(t)}(h)< k\text{ and }I_{i'}^{(t)}-J_{j'}^{(t)}(h)< k.$$ Then $J_{j'}^{(t)}(h)-J_{j}^{(t)}(h)< k$ but the distance between holes in $A_t(h)$ is at least $2^t$ which is greater than $k$. So $i=1$. By using the same argument for $i=2,3\ldots, s_t$, we can obtain that $\eta$ and $\eta(h)$ look like in the following picture
 \begin{center}
{
\begin{pspicture}(0,-1.925)(10.35489,1.925)
\psline[linecolor=black, linewidth=0.04](2.6033332,1.735)(2.6033332,-1.925)
\psline[linecolor=black, linewidth=0.04](1.06,0.895)(5.06,0.895)
\psline[linecolor=black, linewidth=0.04](1.02,-1.125)(4.96,-1.125)
\rput[bl](0.24,1.005){$\eta$}
\rput[bl](0.0,-0.985){$\eta(h)$}
\rput(2.78,1.815){$0$}
\rput(3.79,0.445){$I_1^{(t)}$}
\rput(3.15,-1.555){$J_1^{(t)}(h)$}
\psframe[linecolor=black, linewidth=0.04, dimen=outer](3.69,1.015)(3.63,0.775)
\psframe[linecolor=black, linewidth=0.04, dimen=outer](4.94,0.995)(4.88,0.755)
\psline[linecolor=black, linewidth=0.04, linestyle=dotted, dotsep=0.10583334cm](5.02,-1.125)(6.82,-1.135)
\psline[linecolor=black, linewidth=0.04, linestyle=dotted, dotsep=0.10583334cm](5.14,0.885)(6.82,0.875)
\psframe[linecolor=black, linewidth=0.04, dimen=outer](2.11,1.005)(2.05,0.765)
\rput(2.18,0.435){$I_{s_t}^{(t)}-p_t$}
\rput(1.38,-1.555){$J_{s_t}^{(t)}(h)-p_t$}
\psframe[linecolor=black, linewidth=0.04, dimen=outer](1.3,-0.985)(1.24,-1.225)
\rput(5.01,0.445){$I_2^{(t)}$}
\psline[linecolor=black, linewidth=0.04, linestyle=dotted, dotsep=0.10583334cm](1.2533333,0.815)(1.2533333,-1.135)
\psline[linecolor=black, linewidth=0.04, linestyle=dotted, dotsep=0.10583334cm](2.0833333,0.785)(2.0833333,-1.165)
\psline[linecolor=black, linewidth=0.04](1.26,-0.125)(2.08,-0.125)
\rput(1.7,-0.495){$<k$}
\psline[linecolor=black, linewidth=0.04, linestyle=dotted, dotsep=0.10583334cm](0.41,0.885)(1.02,0.875)
\psline[linecolor=black, linewidth=0.04, linestyle=dotted, dotsep=0.10583334cm](0.43,-1.135)(1.0,-1.145)
\psframe[linecolor=black, linewidth=0.04, dimen=outer](3.01,-0.995)(2.95,-1.235)
\psline[linecolor=black, linewidth=0.04, linestyle=dotted, dotsep=0.10583334cm](2.9833333,0.885)(2.9833333,-1.065)
\psline[linecolor=black, linewidth=0.04, linestyle=dotted, dotsep=0.10583334cm](3.6733334,0.815)(3.6733334,-1.135)
\psline[linecolor=black, linewidth=0.04](2.99,-0.125)(3.66,-0.125)
\rput(3.33,-0.485){$<k$}
\psframe[linecolor=black, linewidth=0.04, dimen=outer](4.23,-1.025)(4.17,-1.265)
\psline[linecolor=black, linewidth=0.04, linestyle=dotted, dotsep=0.10583334cm](4.2033334,0.855)(4.2033334,-1.095)
\rput(4.29,-1.585){$J_2^{(t)}(h)$}
\psline[linecolor=black, linewidth=0.04, linestyle=dotted, dotsep=0.10583334cm](4.903333,0.835)(4.903333,-1.115)
\psline[linecolor=black, linewidth=0.04](4.2,-0.125)(4.87,-0.125)
\rput(4.54,-0.485){$<k$}
\rput(7.3,-1.595){$J_{s_t}^{(t)}(h)$}
\psline[linecolor=black, linewidth=0.04](8.403334,1.765)(8.403334,-1.895)
\rput(8.77,1.775){$p_t$}
\rput(9.42,0.435){$I_1^{(t)}+p_t$}
\psframe[linecolor=black, linewidth=0.04, dimen=outer](9.49,1.025)(9.43,0.785)
\psframe[linecolor=black, linewidth=0.04, dimen=outer](7.19,-1.015)(7.13,-1.255)
\psline[linecolor=black, linewidth=0.04](6.83,-1.115)(9.67,-1.125)
\psline[linecolor=black, linewidth=0.04, linestyle=dotted, dotsep=0.10583334cm](9.7,0.895)(10.39,0.885)
\psline[linecolor=black, linewidth=0.04](6.82,0.885)(9.73,0.895)
\rput(7.88,0.415){$I_{s_t}^{(t)}$}
\psframe[linecolor=black, linewidth=0.04, dimen=outer](8.0,1.015)(7.94,0.775)
\rput(9.3,-1.575){$J_1^{(t)}(h)+p_t$}
\psframe[linecolor=black, linewidth=0.04, dimen=outer](8.8,-0.985)(8.74,-1.225)
\psline[linecolor=black, linewidth=0.04, linestyle=dotted, dotsep=0.10583334cm](9.75,-1.115)(10.38,-1.125)
\psline[linecolor=black, linewidth=0.04, linestyle=dotted, dotsep=0.10583334cm](7.983333,0.795)(7.983333,-1.155)
\psline[linecolor=black, linewidth=0.04, linestyle=dotted, dotsep=0.10583334cm](7.1633334,0.825)(7.1633334,-1.125)
\rput(7.5,-0.515){$<k$}
\psline[linecolor=black, linewidth=0.04](7.17,-0.165)(7.99,-0.165)
\psline[linecolor=black, linewidth=0.04, linestyle=dotted, dotsep=0.10583334cm](8.773334,0.865)(8.773334,-1.085)
\psline[linecolor=black, linewidth=0.04, linestyle=dotted, dotsep=0.10583334cm](9.463333,0.795)(9.463333,-1.155)
\psline[linecolor=black, linewidth=0.04](8.78,-0.145)(9.45,-0.145)
\rput(9.09,-0.505){$<k$}
\end{pspicture}
}

\end{center}

Let $k'=I_1^{(t_0)}-J_1^{(t_0)}(h)$.
There exists $m\in\N$ such that $J_2^{(t_0)}(h)=J_1^{(t_0)}(h)+2^{t_0}m$. Notice that $0\leq I_2^{(t_0)}-J_2^{(t_0)}(h)=I_1^{(t_0)}-J_1^{(t_0)}(h)+2^{t_0}\frac{I_2^{(t_0)}-I_1^{(t_0)}}{2^{t_0}}$ $-2^{t_0}m=k'+2^{t_0}\left(\frac{I_2^{(t_0)}-I_1^{(t_0)}}{2^{t_0}}-m\right)<2^{t_0}$. So $\frac{I_2^{(t_0)}-I_1^{(t_0)}}{2^{t_0}}-m=0$, which implies $I_2^{(t_0)}-J_2^{(t_0)}(h)=k'$. We can repeat the same argument for any $j=3,4,\ldots,s_{t_0}$ and obtain $I_j^{(t_0)}-J_j^{(t_0)}(h)=k'$ for any $j=3,4\ldots,s_{t_0}$.

Let $t=t_0+1$. Then by the definitions of blocks $A_t$ and $A_t(h)$ (see \eqref{cond1}), we have the following inclusions
\begin{align*}
&\{I_1^{(t)},I_2^{(t)},\ldots,I_{s_t}^{(t)}\}\subset\bigcup_{j=0}^{2b_t-1}\left(\{I_1^{(t_0)},I_2^{(t_0)},\ldots,I_{s_{t_0}}^{(t_0)}\}+jp_{t_0}\right) 
\end{align*}
and
\begin{align*}
&\{J_1^{(t)}(h),J_2^{(t)}(h),\ldots,J_{s_t}^{(t)}(h)\}\subset\bigcup_{j=0}^{2b_t-1}\left(\{J_1^{(t_0)}(h),J_2^{(t_0)}(h),\ldots,J_{s_{t_0}}^{(t_0)}(h)\}+jp_{t_0}\right). 
\end{align*}
Hence for any $1\leq i\leq s_t$ there exist $1\leq i_0\leq s_{t_0}$ and $0\leq j<2b_t$ such that $I_i^{(t)}=I_{i_0}^{(t_0)}+jp_{t_0}$ and $J_i^{(t)}(h)=J_{i_0}^{(t_0)}(h)+jp_{t_0}$. Indeed, $I_i^{(t)}-J_i^{(t)}(h)<k<2^{t_0}$. So $I_i^{(t)}-J_i^{(t)}(h)=I_{i_0}^{(t_0)}-J_{i_0}^{(t_0)}(h)=k'$.

We can repeat above arguments for any $t>t_0+1$.
\end{proof}
\begin{rem}
In above proof, we repeated some arguments contained in proofs of Proposition 3 and Theorem 1 in \cite{Bu-Kw}.
\end{rem}
\begin{lem}\label{shift}
Let $h=(n_t)_{t=1}^\infty\in G$ can be lifted to $U\in C(S)$. Then there exists $t_0\geq1$ such that for any $t\geq t_0$ we have
\begin{align*}
&(\{I_1^{(t)},I_2^{(t)},\ldots,I_{s_t}^{(t)}\}-n_t+k') \bmod p_t=\{I_1^{(t)},I_2^{(t)},\ldots,I_{s_t}^{(t)}\}.
\end{align*}
\end{lem}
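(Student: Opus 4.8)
The plan is to combine Lemma \ref{space} with the set identity between the holes of $A_t$ and those of $A_t(h)$ that was recorded in the preliminaries, namely
$$\{J_k^{(t)}(h) : 1\le k\le s_t\}=\{I_k^{(t)}-n_t \ (\bmod \ p_t) : 1\le k\le s_t\}.$$
First I would fix $t_0$ and the constant $k'$ exactly as produced by Lemma \ref{space}, so that for every $t\ge t_0$ and every $1\le i\le s_t$ one has $J_i^{(t)}(h)=I_i^{(t)}-k'$. I would also record that $0\le k'<2^{t_0}$: indeed $k'$ arises from a difference bounded by $k$, and $t_0$ was chosen with $2^{t_0}>k$. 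Together with $I_1^{(t)}=2^t$ (the computation in the proof of Corollary \ref{consp}) this gives $k'<2^{t_0}\le 2^t=I_1^{(t)}\le I_i^{(t)}$ for all $i$; hence every difference $I_i^{(t)}-k'$ already lies in $[0,p_t)$ and requires no reduction modulo $p_t$.

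Next I would read off the set of hole positions of $A_t(h)$ in two ways. On one hand, the displayed identity from the preliminaries describes it as $\{I_i^{(t)}-n_t \ (\bmod \ p_t)\}$. On the other hand, Lemma \ref{space} together with the previous paragraph describes it as $\{I_i^{(t)}-k'\}$, viewed as a subset of $[0,p_t)$. Equating the two descriptions yields the equality of subsets of $\Z/p_t\Z$
$$\{I_i^{(t)}-k' : 1\le i\le s_t\}=\{I_i^{(t)}-n_t : 1\le i\le s_t\}\pmod{p_t}.$$
Finally I would translate both sides by $+k'$ modulo $p_t$. Since translation by a fixed element is a bijection of $\Z/p_t\Z$, it preserves set equality; the left-hand side becomes $\{I_i^{(t)}\}$ and the right-hand side becomes $\{I_i^{(t)}-n_t+k' \ (\bmod \ p_t)\}$, which is precisely the asserted identity. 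The statement then holds for the same $t_0$ as in Lemma \ref{space}, with the same $k'$.

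I do not expect a genuine obstacle here; the work is essentially a two-step substitution once Lemma \ref{space} and the preliminary hole identity are in hand. The only point demanding care is bookkeeping: one must make sure that the index-matched equalities $I_i^{(t)}-J_i^{(t)}(h)=k'$ of Lemma \ref{space} genuinely assemble into an equality of the \emph{full} hole sets, with no cyclic reindexing hidden in the ordering, and that no wraparound modulo $p_t$ occurs when subtracting $k'$. Both are guaranteed by the smallness estimate $k'<2^{t_0}\le 2^t=I_1^{(t)}$, which keeps each $I_i^{(t)}-k'$ inside $[0,p_t)$ and so lets me pass freely between the indexed statement and the set-level statement.
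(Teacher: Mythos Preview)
Your proposal is correct and follows essentially the same route as the paper: combine the preliminary identity $\{J_k^{(t)}(h)\}=\{I_k^{(t)}-n_t \bmod p_t\}$ with the conclusion of Lemma~\ref{space} and then translate by $k'$ modulo $p_t$. Your extra care about the bound $0\le k'<2^{t_0}\le I_1^{(t)}$ to rule out wraparound is a welcome clarification, though strictly speaking Lemma~\ref{space} already delivers the integer equalities $J_i^{(t)}(h)=I_i^{(t)}-k'$ directly.
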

\begin{proof}
By the definition of $\eta(h)$, we have
\begin{align*}
&\{J_1^{(t)}(h),J_2^{(t)}(h),\ldots,J_{s_t}^{(t)}(h)\}=(\{I_1^{(t)},I_2^{(t)},\ldots,I_{s_t}^{(t)}\}-n_t)\ \bmod p_t.
\end{align*}
By Lemma \ref{space}, we obtain for any $t\geq t_0$
$$\{J_1^{(t)}(h),J_2^{(t)}(h),\ldots,J_{s_t}^{(t)}(h)\}=\{I_1^{(t)},I_2^{(t)},\ldots,I_{s_t}^{(t)}\}-k'.$$
Notice that 
\begin{align*}
&((\{I_1^{(t)},I_2^{(t)},\ldots,I_{s_t}^{(t)}\}-n_t) \bmod p_t+k')\bmod p_t=\\
&(\{I_1^{(t)},I_2^{(t)},\ldots,I_{s_t}^{(t)}\}-n_t+k')\bmod p_t.
\end{align*}
The assertion follows.
\end{proof}
\begin{lem}\label{eq}
Let $h=(n_t)_{t=1}^\infty\in G$ can be lifted to $U\in C(S)$. Then there exists $t_0\geq1$ such that for any $t\geq t_0$
\begin{equation}
2^t\divides n_t-k'.
\end{equation}
\end{lem}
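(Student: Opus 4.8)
The plan is to combine the invariance of the hole set established in Lemma \ref{shift} with the fact, recorded in Lemma \ref{holes}, that every hole in $A_t$ occupies a position divisible by $2^t$. First I would take $t_0 \geq 1$ to be the index furnished by Lemma \ref{shift}, so that for every $t \geq t_0$ the set $\{I_1^{(t)}, I_2^{(t)}, \ldots, I_{s_t}^{(t)}\}$ is invariant under the map $x \mapsto (x - n_t + k') \bmod p_t$; here $k'$ is the common value from Lemma \ref{space}.

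Next I would fix an arbitrary $t \geq t_0$ and any single hole position, say $I_1^{(t)}$ (recall $I_1^{(t)} = 2^t$ by Corollary \ref{consp}). By the invariance just quoted there exists a hole $I_{j'}^{(t)}$ with $(I_1^{(t)} - n_t + k') \bmod p_t = I_{j'}^{(t)}$. Now I would invoke \eqref{hole1}, which forces $2^t \divides I_1^{(t)}$ and $2^t \divides I_{j'}^{(t)}$. Since $2^t \divides p_t = 2^t b_1 b_2 \ldots b_t$, reduction modulo $p_t$ does not change residues modulo $2^t$, so $I_1^{(t)} - n_t + k' \equiv I_{j'}^{(t)} \equiv 0 \ (\bmod \ 2^t)$. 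Combining this with $2^t \divides I_1^{(t)}$ yields $2^t \divides n_t - k'$, which is exactly the claimed divisibility for this $t$.

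Because the argument applies to every $t \geq t_0$, the assertion follows at once. I do not expect any genuine obstacle in this lemma: all of the substantive work—the coincidence of hole spacings and the translation-invariance of the hole set modulo $p_t$—has already been carried out in Lemmas \ref{space} and \ref{shift}. The only point that needs a moment's care is the observation that reducing modulo $p_t$ preserves divisibility by $2^t$, and this is immediate from $2^t \divides p_t$.
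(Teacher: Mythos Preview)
Your proof is correct and follows essentially the same approach as the paper: both pick the hole $I_1^{(t)}$, use Lemma~\ref{shift} to find another hole $I_{j'}^{(t)}$ with $I_1^{(t)}-n_t+k'\equiv I_{j'}^{(t)}\ (\bmod\ p_t)$, and then deduce $2^t\divides n_t-k'$ from $2^t\divides I_1^{(t)}$, $2^t\divides I_{j'}^{(t)}$, and $2^t\divides p_t$. The only cosmetic difference is that the paper writes the congruence as $I_1^{(t)}-n_t\equiv I_m^{(t)}-k'\ (\bmod\ p_t)$ rather than isolating $(I_1^{(t)}-n_t+k')\bmod p_t$.
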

\begin{proof}
By Lemma \ref{shift}, for any $t\geq t_0$ there exists $1\leq m\leq s_t$ such that
$$I_1^{(t)}-n_t\equiv I_m^{(t)}-k'\ (\bmod \ p_t).$$
Hence because $2^t\divides p_t$ and $2^t\divides I_{m}^{(t)}-I_1^{(t)}$, we obtain $2^t\divides n_t-k'$.
\end{proof}
\section{Proof of Theorem \ref{centralizer}}
Assume that $h=(n_t)_{t=1}^\infty\in G\setminus \Z\overline{1}$ can be lifted to $U\in C(S)$.
Let $t\geq t_0$ be large enough to satisfy the conclusion of Lemma \ref{space} and $0<n_t-k'$. Such a $t$ exists because the coordinates of $h$ are unbounded.
We claim
\begin{equation}\label{div}
b_i\divides n_t-k'\text{ for each }1\leq i\leq t.
\end{equation}
Indeed, by Lemma \ref{eq}, we have $2^t\divides n_t-k'$. Assume that $\frac{n_t-k'}{2^t}\equiv j \ (\bmod \ b_i)$ for some $1\leq i\leq t$ and some $1\leq j\leq b_i-1$. Then by Lemma \ref{podz}, there exists $1\leq m\leq s_t$ such that $\frac{I_m^{(t)}}{2^t}\equiv j \ (\bmod \ b_i)$. So $b_i\divides \frac{I_m^{(t)}-n_t+k'}{2^t}$, which implies $b_i\divides I_m^{(t)}-n_t+k'$. Because $b_i\divides p_t$, we obtain $b_i\divides (I_m^{(t)}-n_t+k') \bmod p_t$. By Lemma \ref{shift}, we have $(I_m^{(t)}-n_t+k') \bmod p_t\in\{I_1^{(t)},I_2^{(t)},\ldots,I_{s_t}^{(t)}\}$. But this contradicts \eqref{hole1}. So \eqref{div} holds. Hence $n_t-k'$ is a multiple of $b_1,b_2,\ldots,b_t$ and $2^t$, which are coprime. Because $0\leq n_t,k'<p_t$, so $n_t-k'=0$. But this contradicts $n_t-k'>0$. The assertion follows.
\section{Final remarks}
\begin{rem}
There exist regular Toeplitz sequences satisfying condition $(Sh)$ whose automorphism groups are non-trivial. There are some of \emph{$k_t$-Toeplitz sequences}. For these sequences, the block $A_t$ has $k_t$ holes with equal distances between them. For more information see the chapter \emph{$k_t$-Toeplitz flows} in \cite{Bu-Kw}. 
In the case of $\eta$, $I_1^{(t)}=2^t$ and $I_2^{(t)}=2^{t+1}$ for any $t\geq1$, so if the holes of $A_t$ were equidistant, their number would equal $\frac{p_t}{2^t}=b_1b_2\ldots b_t$ but by Lemma \ref{holes}, the number of holes in $A_t$ is smaller than $b_1b_2\ldots b_t$. Hence, $\eta$ is not a~$k_t$-Toeplitz sequence. 
\end{rem}
\begin{rem}\footnote{Such examples was pointed out to me by W. Bu{\l}atek.}
We will give an example of a Toeplitz sequence $x\in\{0,1\}^\Z$ for which the block $A_t$ has two holes for each $t\geq1$ and the Boolean complement $\neg$ is an element of the automorphism group, recall that $(\neg y)(n)=0$ if and only if $y(n)=1$ for any $y\in\overline{\mathcal{O}_S(x)}$. Notice that $\neg$ is generated by the code $f\colon\{0,1\}\to\{0,1\}$ of length $1$ defined by the formula $f(0)=1$ and $f(1)=0$. Because $\neg$ is continuous and invertible for the full shift, to prove $\neg$ is an element of the automorphism group of the Toeplitz subshift, it is sufficient to show that $\neg(\overline{\mathcal{O}_S(x)})\subset\overline{\mathcal{O}_S(x)}$. Let $n\in\N$ and $B_1\in\{0,1\}^n$. By $\widetilde{B}\in\{0,1\}^n$ we denote the Boolean complement of the block $B\in\{0,1\}^n$, i.e. $\widetilde{B}(i)=f(B(i))$ for any $0\leq i<n$. Let $A_1=B_1\_\widetilde{B_1}\_$ and $(c_t)_{t\in\N}\subset\{0,1\}$. Then put $B_2:=B_1c_1\widetilde{B_1}c_1B_1$ and $A_2:=B_2 \_ \widetilde{B_2}\_$. Notice that $A_2$ is the concatenation $A_1A_1A_1$, where all holes except of two are filled by symbols $0$ or $1$. Let $B_{t+1}=B_tc_t\widetilde{B_t}c_tB_t$ and $A_{t+1}=B_{t+1}\_ \widetilde{B_{t+1}}\_ $ for any $t\in\N$. Notice that the sequence of blocks $(A_t)_{t=1}^\infty$ satisfies \eqref{cond1}-\eqref{cond3}, so it determines a Toeplitz sequence $x\in\{0,1\}^\Z$. By the definition of $x$, it is non-periodic and for any block $B$ appearing in $x$ its Boolean complement $\widetilde{B}$ appears in $x$. So $\neg(\overline{\mathcal{O}_S(x)})\subset\overline{\mathcal{O}_S(x)}$. 
\end{rem}
\begin{rem}\footnote{This fact was pointed out to me by S. Kasjan who also gave a proof of it which we recall.}
We will show that the Boolean complement is not an element of the automorphism group of $\mathscr{B}$-free systems. In fact, we will prove that the Boolean complement is an element of the automorphism group of $\mathscr{B}$-free systems if and only if $\mathscr{B}=\{2\}$. We will consider two cases:
\begin{enumerate}[(1)]
\item there exist $b,c\in\mathscr{B}$ such that $\gcd(b,c)=1$,\label{1}
\item $\gcd(b,c)>1$ for any $b,c\in\mathscr{B}$.\label{2}
\end{enumerate} 
Assume that \eqref{1} holds. First notice that $\eta(bi)=0$ for any $i\in\Z$. Suppose that $\neg\in C(S)$. Then for any $n\in\N$ there exists $r\in\Z$ such that 
\begin{equation}\label{3}
\eta(bi+r)=1 \text{ for any } i=1,2,\ldots,n.
\end{equation} 
Let $n\geq c$ and $r\in\Z$ satisfies \eqref{3}. By Lemma \ref{kogruencja}, the congruence 
\begin{equation}\label{cong}
bx+r\equiv 0 \bmod c
\end{equation} 
has a solution $x_0\in\Z$. Note that also $x_0+kc$ is a solution of \eqref{cong} for any $k\in\Z$. Hence $\eta(b(x_0+kc)+r)=0$ for any $k\in\Z$. Notice that there exists $k\in\Z$ such that $1\leq x_0+kc\leq c\leq n$. But by \eqref{3}, we have $\eta(b(x_0+kc)+r)=1$. This is a contradiction.

Assume that \eqref{2} holds. Notice that if $2\in\mathscr{B}\neq\{2\}$ then the block $00$ appears in $\eta$. So there exist $n\in\Z$ and $b,c\in\mathscr{B}$ such that $b\divides n$ and $c\divides n+1$. But then $\gcd(b,c)=1$. Hence $2\not\in\mathscr{B}$. Then $\min\mathscr{B}\geq3$. So the block $11$ appears in $\eta$. Suppose that $\neg\in C(S)$. Then $00$ apears in $\eta$. So there exist $b,c\in\mathscr{B}$ such that $\gcd(b,c)=1$, which contradicts \eqref{2}. Notice that in case of $\mathscr{B}=\{2\}$ we have $S=\neg$. Hence $\neg$ is an element of the automorphism group of the $\mathscr{B}$-free subshift if and only if $\mathscr{B}=\{2\}$. 
\end{rem}
For $A\subset \N$, we recall several notions of asymptotic density. We have:
\begin{itemize}
\item
$\underline{d}(A):=\liminf\limits_{N\to\infty}\frac1N|A\cap[1,N]| \text{ (\emph{lower density} of $A$)}$,
\item
$\overline{d}(A):=\limsup\limits_{N\to\infty}\frac1N|A\cap[1,N]| \text{ (\emph{upper density} of $A$)}$.
\end{itemize}
If the lower and the upper density of $A$ coincide, their common value $d(A):=\un{d}(A)=\ov{d}(A)$ is called the \emph{density} of~$A$. 

The \emph{logarithmic density} of $A$ is 
$$\bdelta(A):=\lim\limits_{N\to\infty}\frac{1}{\log N}\sum\limits_{a\in A, 1\leq a\leq N}\frac{1}{a},$$
whenever the limit exists.
A set $\mathscr{B}\subset \N\setminus \{1\}$ is called \emph{Behrend} if $d(\cm_{\mathscr{B}})=1$ (see \cite{Davenport1936,MR0043835}). A set $\mathscr{B}$ is \emph{taut} when is primitive, i.e. for any $b,b'\in\mathscr{B}$ if $b\divides b'$ then $b=b'$, and does not contain $c\mathscr{A}$ with $c\in\N$ and $\mathscr{A}\subset \N\setminus\{1\}$ that is Behrend. In~\cite{MR1414678}, it is proved that
for any $\mathscr{B}\subset\N$ there exists a primitive $\mathscr{B}'\subset\mathscr{B}$ such that $\mathcal{M}_{\mathscr{B}}=\mathcal{M}_{\mathscr{B}'}$.
As shown in \cite{AB-SK-JKP-ML}, we have a ``good'' theory of $\mathscr{B}$-free subshifts, both for topological dynamics as well as ergodic theory point of view whenever $\mathscr{B}$ is taut. On the other hand, Toeplitz $\mathscr{B}$-free systems $(X_\eta,S)$ play a special role in the theory of $\mathscr{B}$-free systems. The Toeplitz case can be characterized by the minimality of $(X_\eta,S)$, \cite{AB-SK-JKP-ML}, more precisely by the fact that $\eta$ itself is a Toeplitz sequence \cite{SK-GK-ML}. We have the following.
\begin{prop}\label{taut}
For any primitive $\mathscr{B}\subset \N\setminus\{1\}$, if $\eta=\raz_{\mathcal{F}_\mathscr{B}}$ is a~Toeplitz sequence, then $\mathscr{B}$ is taut.
\end{prop}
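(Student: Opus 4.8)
The plan is to prove the contrapositive: if $\mathscr{B}$ is primitive but \emph{not} taut, then $\eta$ is not a Toeplitz sequence. By the characterisation of tautness recalled above, a primitive $\mathscr{B}$ that fails to be taut must contain a set of the form $c\mathscr{A}$, where $c\in\N$ and $\mathscr{A}\subset\N\setminus\{1\}$ is Behrend, i.e.\ $d(\mathcal{M}_\mathscr{A})=1$. I would fix such $c$ and $\mathscr{A}$ (note $\mathscr{A}\neq\emptyset$, since $\mathcal{M}_\emptyset$ has density $0$) and exhibit a single position of $\eta$ whose eventual periodicity fails.

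First I would check that $\eta(c)=1$, i.e.\ that $c$ is $\mathscr{B}$-free. If some $b\in\mathscr{B}$ divided $c$, then choosing any $a\in\mathscr{A}$ (so $a\geq2$ and $ca\in\mathscr{B}$) we would have $b\divides c\divides ca$, whence $b=ca$ by primitivity; but $b\leq c<ca$, a contradiction. Next I would record the elementary fact that a Behrend set has arbitrarily long runs of consecutive multiples: if every run of consecutive elements of $\mathcal{M}_\mathscr{A}$ had length at most $L$, then each window of $L+1$ consecutive integers would contain an $\mathscr{A}$-free number, forcing $\underline{d}(\mathcal{M}_\mathscr{A})\leq 1-\tfrac{1}{L+1}<1$ and contradicting $d(\mathcal{M}_\mathscr{A})=1$.

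Now suppose, for contradiction, that $\eta$ is Toeplitz. Then the position $c$ is eventually periodic: there is $s\in\N$ with $\eta(c+ks)=\eta(c)=1$ for every $k\in\Z$. Restricting to $k\in c\Z$, i.e.\ putting $k=cj$, I obtain $\eta\bigl(c(1+js)\bigr)=1$ for all $j\in\Z$; in other words every integer $m\equiv1\ (\bmod\ s)$ yields a $\mathscr{B}$-free multiple $cm$ of $c$. Consequently, among any $s$ consecutive integers $m$ there is at least one (the one congruent to $1$ modulo $s$) for which $cm$ is $\mathscr{B}$-free.

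The contradiction then comes from the scaled copy $c\mathscr{A}\subseteq\mathscr{B}$: whenever $m\in\mathcal{M}_\mathscr{A}$, some $a\in\mathscr{A}$ divides $m$, so $ca\divides cm$ and hence $cm\in\mathcal{M}_{c\mathscr{A}}\subseteq\mathcal{M}_\mathscr{B}$, i.e.\ $cm$ is \emph{not} $\mathscr{B}$-free. By the long-run property there is a block of $s$ consecutive integers $m$ entirely contained in $\mathcal{M}_\mathscr{A}$; for all of these $cm\in\mathcal{M}_\mathscr{B}$, yet one such $m$ is $\equiv1\ (\bmod\ s)$ and must give a $\mathscr{B}$-free $cm$. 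This contradiction shows that $\eta$ cannot be Toeplitz, proving the proposition. The only delicate points are the verification that $c$ itself is $\mathscr{B}$-free (so that the chosen position carries the symbol $1$) and the passage from $d(\mathcal{M}_\mathscr{A})=1$ to arbitrarily long runs of consecutive multiples; the heart of the argument is the observation that Toeplitz periodicity at $c$ descends, via the substitution $k=cj$, to syndeticity of the $\mathscr{B}$-free numbers along the progression $c\Z$, which is incompatible with the density-one behaviour of $\mathcal{M}_\mathscr{A}$.
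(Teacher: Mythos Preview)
Your proof is correct and follows the same overall strategy as the paper: start from $c\mathscr{A}\subset\mathscr{B}$ with $\mathscr{A}$ Behrend, verify $\eta(c)=1$ via primitivity, and then show that Toeplitz periodicity at the position $c$ is incompatible with $d(\mathcal{M}_\mathscr{A})=1$ along the progression $c\Z$.

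The execution differs slightly. The paper stays in the language of densities: it computes $\underline{d}(\mathcal{M}_\mathscr{B}\cap c\Z)=1/c$ directly from $c\mathcal{M}_\mathscr{A}\subset\mathcal{M}_\mathscr{B}$, then uses Lemma~\ref{inter} to identify $(c+m\Z)\cap c\Z=\lcm(c,m)\Z+c$ and obtain $\underline{d}(\mathcal{F}_\mathscr{B}\cap c\Z)\geq 1/\lcm(c,m)>0$, which contradicts the previous equality. You instead translate $d(\mathcal{M}_\mathscr{A})=1$ into the combinatorial statement that $\mathcal{M}_\mathscr{A}$ contains arbitrarily long runs of consecutive integers, and replace the appeal to Lemma~\ref{inter} by the direct substitution $k=cj$, landing on an explicit element that is simultaneously forced into $\mathcal{F}_\mathscr{B}$ and into $\mathcal{M}_\mathscr{B}$. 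Your route is marginally more elementary (no auxiliary lemma on intersections of progressions, no density bookkeeping at the end), while the paper's version makes the quantitative content more visible; substantively the two arguments are the same.
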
 
\begin{proof}
Assume that $\raz_{\mathcal{F}_\mathscr{B}}$ is a~Toeplitz sequence and $\mathscr{B}$ is not taut. Then for some $c\in\N$ and $\mathscr{A}\subset\N\setminus\{1\}$ such that $d(\mathcal{M}_{\mathscr{A}})=1$ we have $c\mathscr{A}\subset\mathscr{B}$. So 
\begin{equation}\label{density}
\frac{1}{c}=d(c\mathcal{M}_\mathscr{A})\leq \underline{d}(\mathcal{M}_\mathscr{B}\cap c\Z)\leq\underline{d}(c\Z)=\frac{1}{c}
\end{equation}
Notice that $c'\not\in\mathscr{B}$ for any $c'|c$ because $\mathscr{B}$ is primitive. So $\eta(c)=1$. By the assumption that $\eta$ is a Toeplitz sequence, there exists $m\in\N$ such that 
\begin{equation*}
\eta(c+m\ell)=1\text{ for any }\ell\in\Z.
\end{equation*} 
Hence $c+m\Z\subset\mathcal{F}_\mathscr{B}$, which implies $(c+m\Z)\cap c\Z\subset\mathcal{F}_\mathscr{B}$. By Lemma \ref{inter}, we have 
\begin{equation*}
(c+m\Z)\cap c\Z=\lcm(c,m)\Z+c. 
\end{equation*}
 So $\underline{d}(\mathcal{F}_\mathscr{B}\cap c\Z)\geq\frac{1}{\lcm(c,m)}$ which contradicts \eqref{density}. The assertion follows.
\end{proof}
\begin{rem}
The sequences we have considered here are regular Toeplitz sequences, hence the systems they determine are strictly ergodic \cite{MR41426} and of zero entropy. There exist non-regular Toeplitz sequences of $\mathscr{B}$-free origin; see Example 4.2 in \cite{SK-GK-ML}. 
\end{rem}
There seem to be natural questions: to characterize those Toeplitz sequences which are of $\mathscr{B}$-free origin. Which non-regular Toeplitz sequences are of $\mathscr{B}$-free origin? Can entropy be positive in the Toeplitz $\mathscr{B}$-free case? Is always the automorphism group trivial (question by Lema\'{n}czyk for all $\mathscr{B}$-free systems)?

\subsection*{Acknowledgements}
This research was supported by Narodowe Centrum Nauki grant UMO-2014/15/B/ST1/03736. The author thanks her advisor Mariusz Lema{{\'n}}czyk for helpful discussions, remarks and motivating to improve this text and Mieczysław Mentzen for some useful comments after reading a preliminary version.


\begin{thebibliography}{HD}






\normalsize
\baselineskip=17pt

\bibitem[ALR]{Ab-Le-Ru}
 E.~H. Abdalaoui El~Abdalaoui, M.~Lema\'{n}czyk and T.~de~la
 Rue, \emph{ A~dynamical point of view on the set of $\mathscr{B}$-free integers}, International Mathematics Research Notices 2015 (2015), no 16,~7258--7286.
 \bibitem[BH]{Baake:2015aa}
 M.~Baake and C.~Huck, \emph{Ergodic properties of visible lattice points}, Proceedings of the Steklov Institute of Mathematics, 288 (2015),~165--188.
\bibitem[BKKPL]{AB-SK-JKP-ML} A.\ Bartnicka, S. Kasjan, J. Ku{\l}aga-Przymus, M. Lema\'{n}czyk, \emph{$\mathscr{B}$-free systems revisited}, to appear in Trans. Amer. Math. Soc.
\bibitem[BK]{Bu-Kw}
W. Bu{\l}atek and J. Kwiatkowski, \emph{The topological centralizers of Toeplitz flows and their $\Z_2$-extensions}, Publications Matematiques, Vol. 34, (1990),~45-65.
\bibitem[CS]{MR3055764}
F.~Cellarosi and Y.~G. Sinai, \emph{Ergodic properties of square-free numbers}, J. Eur. Math. Soc., 15 (2013),~1343--1374.
\bibitem[CK]{MR3565928}
V.~Cyr and B.~Kra, \emph{The automorphism group of a minimal shift of stretched exponential growth}, J. Mod. Dyn. 10 (2016),~483--495.
\bibitem[DE]{Davenport1936}
H.~Davenport and P.~Erd\H{o}s, \emph{On sequences of positive integers}, Acta Arithmetica, 2 (1936),~147--151.
\bibitem[DE1]{MR0043835}
\leavevmode\vrule height 2pt depth -1.6pt width 23pt, \emph{On sequences of positive integers}, J. Indian Math. Soc. (N.S.), 15 (1951),~19--24.
 \bibitem[DDMP]{MR3436754}
S. Donoso, F. Durand, A. Maass and S. Petite, \emph{On automorphism groups of low complexity subshifts}, Ergodic Theory and Dynamical Systems, 36(2016), no. 1,~64--95. 
 \bibitem[DDMP1]{DDMP}
\leavevmode\vrule height 2pt depth -1.6pt width 23pt, \emph{ On automorphism groups of Toeplitz subshifts}, arXiv:1701.00999.
\bibitem[D]{MR2180227}
 T.~Downarowicz, \emph{Survey of odometers and {T}oeplitz flows}, Algebraic and topological dynamics, vol.~385 of Contemp. Math., Amer. Math. Soc., Providence, RI, 2005,~7--37.
\bibitem[G]{MR1958753}
E.~Glasner, \emph{Ergodic theory via joinings}, vol.~101 of Mathematical Surveys and Monographs, American Mathematical Society, Providence, RI, 2003.
\bibitem[Ha]{MR1414678}
R.~R. Hall, \emph{Sets of multiples}, vol.~118 of Cambridge Tracts in Mathematics, Cambridge University Press, Cambridge, 1996.
\bibitem[He]{MR0259881} 
G. A. Hedlund, \emph{Endomorphisms and automorphisms of the shift dynamical system}, Math. Systems Theory, 3 (1969),~320--375.
\bibitem[JK]{MR41426}
K. Jacobs and M. Keane, \emph{$0$--$1$-sequences of Toeplitz type}, Z. Wahrscheinlichkeitstheorie und
Verw. Gebiete 13 (1969), 123–131.
\bibitem[KKL]{SK-GK-ML}
S. Kasjan, G. Keller and M. Lema\'nczyk, \emph{Dynamics of $\mathscr{B}$-free sets: a view through the window},
arXiv:1702.02375.
\bibitem[KPLW]{MR3356811}
J.~Ku{\l}aga-Przymus, M.~Lema{{\'n}}czyk and B.~Weiss, \emph{On invariant measures for $\mathscr{B}$-free systems}, Proc. Lond. Math. Soc. (3), 110 (2015),~1435--1474.
\bibitem[M]{Me}
M.~K.\ Mentzen, \emph{Automorphisms of subshifts defined by $\mathscr{B}$-free sets of integers}, to appear in Coll.\ Math.
\bibitem[P]{Peckner:2015aa}
R.~Peckner, \emph{Uniqueness of the measure of maximal entropy for the squarefree flow}, Israel Journal of Mathematics, 210 (2015),~335--357.
\bibitem[S]{sarnak-lectures}
P.~Sarnak, \emph{Three lectures on the {M}{\"o}bius function, randomness and dynamics} \newblock \url{http://publications.ias.edu/sarnak/}.
\bibitem[W]{MR 86k:54062}
S. Williams, \emph{Toeplitz minimal flows which are not uniquely ergodic}, Z. Wahrsch. Verw. Gebiete 67 (1984).

\end{thebibliography}
\end{document}